\newtheorem{theorem}{Theorem}[section]
\newtheorem{proposition}[theorem]{Proposition}
\newtheorem{lemma}[theorem]{Lemma}
\newtheorem{corollary}[theorem]{Corollary}
\theoremstyle{definition}
\newtheorem{definition}[theorem]{Definition}
\newtheorem{example}[theorem]{Example}
\newtheorem{remark}[theorem]{Remark}
\newcommand{\abs}[1]{\left|#1\right|}
\newcommand{\R}{{\mathbb R}}
\renewcommand{\epsilon}{\varepsilon}
\renewcommand{\phi}{\varphi}
\newcommand{\norm}[1]{\left\|#1\right\|}
\newcommand{\scalar}[2]{\left\langle#1,#2\right\rangle}
\newcommand{\dx}[1][x]{\,\mathrm{d}#1}
\newcommand{\dt}{\dx[t]}
\newcommand{\pd}[2]{\frac{\partial #1}{\partial #2}}
\DeclareMathOperator*{\argmin}{\mathrm{argmin}}
\DeclareMathOperator*{\esssup}{\mathrm{ess\,sup}}
\DeclareMathOperator*{\essinf}{\mathrm{ess\,inf}}
\renewcommand{\div}{\mathrm{div}}
\DeclareMathOperator{\convhull}{\overline{conv}}
\DeclareMathOperator{\supp}{\mathrm{supp}}
\newcommand{\parbd}{\partial_{\operatorname{par}}}
\newcommand{\laplace}{\Delta}
\newcommand{\piku}{\Pi_K \bm u}
\begin{document}

\title[Convex hull property]{Convex hull property for elliptic and parabolic systems of PDE}

\author{Anton\'in \v Ce\v s\'ik}
\address{Faculty of Mathematics and Physics, Charles University, Sokolovsk\'{a} 83, 18675, Prague, Czech Republic} \email{cesik@karlin.mff.cuni.cz}


\begin{abstract}
We study the convex hull property for systems of partial differential equations. This is a generalisation of the maximum principle for a single equation. We show that the convex hull property holds for a class of elliptic and parabolic systems of non-linear partial differential equations. In particular, this includes the case of the parabolic $p$-Laplace system. The coupling conditions for coefficients are demonstrated to be optimal by means of respective counterexamples. 
\end{abstract}

\maketitle


\section{Introduction}\label{chap:intro}

Maximum principle for scalar-valued functions has been a subject of interest for a long time. Many monographs have been devoted to this study, \cite{pucciMaximumPrinciple2007} to name just one of them. Less has been investigated in its vector-valued counterpart, the \emph{convex hull property}. There have been several works which establish the convex hull property of certain variational minimization problems, relying on monotonicity and convexity \cite{bildhauerPartialRegularityClass2002}, \cite{bildhauerGeometricMaximumPrinciple2011}, \cite{leonettiMaximumPrincipleVector2005} or recently polyconvexity \cite{carozzaPolyconvexFunctionalsMaximum2023}. Still other results deal with finite element minimizers \cite{dieningConvexHullProperty2013}, approximate minimizers \cite{katzourakisMaximumPrinciplesVectorial2013} or other geometrical conditions \cite{ricceriConvexHulllikeProperty2016}.

Regarding systems of partial differential equations, the case of classical solutions of linear elliptic and parabolic systems has been dealt with extensively in the monograph \cite{kresinMaximumPrinciplesSharp2012}, where the authors discuss the maximum modulus principle for vector-valued solutions. Still another work \cite{leonardiMaximumPrinciplesQuasilinear2020} deals with quasilinear elliptic systems with special structure of coefficients.

Our area of investigation will be twofold. First, we investigate the case of weak solutions of linear elliptic systems in divergence form
\begin{equation*}
-\div(\mathbb A \nabla \bm u) = \bm 0, \quad \text{in }\Omega
\end{equation*}
where $\Omega\subset \R^N$ is a bounded Lipschitz domain and $\mathbb A\colon \Omega\to \R^{(N\times n)\times (N\times n)}$ is a fourth-order tensor of coefficients. 
In particular we focus on the possible coupling of the coefficients and show that if the structure of the coefficients is 
\begin{equation*}
\mathbb A = \mathcal A \otimes \bm a,\quad \text{i.e.}\quad \mathbb A_{ij}^{\alpha\beta}(x) = \mathcal A^{\alpha\beta} a_{ij}(x), \quad \alpha,\beta=1,\,\dots,N; i,j=1,\dots,n
\end{equation*}
where $\mathcal A\in \R^{N\times N}$ and $\bm a\colon \Omega \to \R^{n\times n}$ bounded. Note that these are the same conditions as \cite{kresinMaximumPrinciplesSharp2012}   have for the (a priori weaker) maximum modulus principle. We show that the convex hull property for this elliptic system holds, as we will see in Theorem \ref{thm:CHPellipticlin}. Sharpness of the structural assumption is demonstrate by Example \ref{ex:counterexell} where we show that the convex hull property for general elliptic systems fails even in one spatial dimension.

Second, we turn to parabolic systems 
\begin{equation*}
\partial_t \bm u  -\div (\mathbb A \nabla \bm u) +\bm b \cdot \nabla \bm u + c \bm u = 0, \quad \text{in }Q_T:=(0,T)\times \Omega
\end{equation*}
where the question of coupling of the coefficients is even more restraining,
\begin{equation*}
\mathbb A = I \otimes \bm a,\quad \text{i.e.}\quad \mathbb A_{ij}^{\alpha\beta}(x) = \mathcal \delta _{\alpha\beta} a_{ij}(x)
\end{equation*}
where $I\in \R^{N\times N}$ the identity matrix and thus $\delta_{\alpha\beta}$ the Kronecker delta. Again, this is with accord with the maximum modulus principle considerations in \cite{kresinMaximumPrinciplesSharp2012}. The necessity of the identity matrix in the decomposition is demonstrated by Example \ref{ex:counterexpara}, again in one spatial dimension, where we show failure of the convex hull property for constant diagonal non-identity matrix. 

However, we find that in this setting we are able to treat non-linearities (essentially scalar in nature) both in the leading term and in the lower-order terms, that is we alllow\begin{equation*}
\mathbb A = a_0 I \otimes \bm a,\quad \text{i.e.}\quad \mathbb A_{ij}^{\alpha\beta}(x) = a_0(x)\delta_{\alpha\beta} a_{ij}(x)
\end{equation*}
where $a_0$ is a non-negative scalar function,  and $|\bm b|\leq C \sqrt{a_0}$, and $c$ non-negative function. In this case we show that the convex hull property is satfied, for the precise statement see Theorem \ref{thm:CHPparanonlin}.

As a particular case we obtain that the convex hull property holds for the vector-valued parabolic $p$-Laplace system with $1<p<\infty$:
\begin{equation*}
-\div(|\nabla \bm u|^{p-2} \nabla \bm u) = 0 \quad \text{in } (0,T)\times \Omega.
\end{equation*}

\subsection{Types of maximum and convex hull principles}
We summarize here the types of maximum principles for scalar equations, and the analogous notions for systems. Note that by ``boundary'' we mean for elliptic equations (on $\Omega$) simply $\partial\Omega$, but for parabolic equations (on $(0,T)\times \Omega$), we mean the parabolic boundary $(\{0\}\times\Omega)\cup([0,T]\times \partial\Omega)$.
\begin{itemize}
    \item \textbf{Weak maximum principle.} The values of any solution do not exceed its maximum on the boundary.
    \item \textbf{Strong maximum principle.} If a solution attains its maximum in the interior, then it is constant. 
    \item \textbf{Maximum modulus principle.} For any solution $u$, the function $\abs u$ satisfies the weak maximum principle.
\end{itemize}
The vector-valued analogue for systems of equations is the \emph{convex hull property}. It generalizes the maximum principle\footnote{More precisely, it generalizes the maximum and minimum principle simultaneously.} in the sense that it says that the values of the solution lie ``in between'' its boundary values.
Hence for systems of equations, we have 
\begin{itemize}
    \item \textbf{Convex hull property.} The values of any solution lie in the convex hull of its boundary values.
    \item \textbf{Strong convex hull property.} If a solution attains an extremal point\footnote{Recall that $x\in M\subset \R^N$ is an \emph{extremal point of $M$} if it is not an interior point of a line segment contained in $M$.} of its boundary values, then it is constant.
    \item \textbf{Maximum modulus principle.} For any solution $\bm u$, the function $\abs {\bm u}$ satisfies the weak maximum principle.
\end{itemize}
In this paper, we are interested in the Convex hull property, in the sense above.

\subsection{Notation}
We use standard notation but to avoid any confusion, we briefly state it here.

Throughout the paper, $\Omega\subset\R^n$ will always denote a bounded Lipschitz domain. Vector-valued functions, denoted by boldface letter, usually have $N$ components and depend on $n$ variables called~$x$, that is $\bm u\colon \Omega \to \R^N$. In the time-dependent case $\bm u\colon (0,T)\times \Omega \to \R^N$ and the first variable is called $t$, here we also sometimes write $\bm u(t):= \bm u(t,\cdot)$. 
Variables under the integral sign are often omitted, e.g. $\int_\Omega \bm u \dx:= \int_\Omega \bm u(x)\dx$.
The symbol ``a.e.'' means almost everywhere either with respect to Lebsegue measure on $\Omega$ or the surface measure on $\partial\Omega$, depending on the context.
The closed convex hull of a set $M\subset \R^N$ is $\convhull M$ (note also Definition \ref{def:CHPell}).

The indices $i,j,k$ usually range over $1,\dots, n$ and are written as lower index. The indices $\alpha,\beta$ range over $1,\dots, N$ and are written as upper index. 
The Euclidean inner product is denoted  $x\cdot y$, the corresponding norm $\abs x$.
The inner product of matrices $A,B\in\R^{N\times n}$ is $A:B= \sum_{\alpha=1}^N\sum_{i=1}^n A_{i}^\alpha B_i^\alpha$, the corresponding matrix norm is $|A|$.
The positive and negative parts of a scalar are $a^+=\max (a,0)$, $a^-=\max(-a,0)$.
The (spatial) gradient of $\bm u$ is $(\nabla \bm u)_i^\alpha =  \pd{ u^\alpha}{x_i}$.
 The divergence of $\bm F\colon \Omega\to \R^{N\times n}$ is $(\div \bm F)^\alpha = \sum_{i=1}^n \pd{F^\alpha_{i}}{x_i}$.
The Laplace operator is $\Delta \bm u = \div (\nabla \bm u)$.

The $\R^N$-valued Lebesgue $L^p(\Omega;\R^N)$ and Sobolev space $W^{1,p}(\Omega;\R^N)$, the subspace with zero trace $W^{1,p}_0(\Omega;\R^N)$, and $W^{-1,p'}(\Omega;\R^N)=\left(W^{1,p}_0(\Omega;\R^N)\right)^*$ is its dual space.
The corresponding Bochner spaces are $L^p((0,T); X)$, $W^{1,p}((0,T); X)$. In all the function spaces, for $N=1$ we omit the $\R^N$ symbol.
\subsection{Example: Laplace system}
As a motivation for studying the convex hull property, let us first illustrate it on the example of the classical solution of the Laplace equation.
In this case the convex hull property can be obtained by using the maximum principle for scalar Laplace equation in every direction, as demonstrated below. 

\begin{theorem}[Convex hull property for Laplace equation in $\R^N$]
Suppose that $\bm u\in C^2(\Omega;\R^N)\cap C(\overline{\Omega};\R^N)$ solves the Laplace equation
\begin{equation*}
    \laplace \bm u = \bm 0 \quad \text{in }\Omega.
\end{equation*}
Then $\bm u(\Omega)\subset \convhull \bm u(\partial\Omega)$.
\end{theorem}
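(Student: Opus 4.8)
The plan is to reduce the vector-valued statement to the classical scalar maximum principle applied in every direction. Recall that the closed convex hull of the compact set $\bm u(\partial\Omega)\subset\R^N$ can be written as an intersection of closed half-spaces: by the Hahn--Banach (supporting hyperplane) theorem,
\[
\convhull \bm u(\partial\Omega) = \bigcap_{\bm e\in S^{N-1}} \left\{ \bm y\in\R^N : \bm y\cdot\bm e \le \sup_{z\in\partial\Omega} \bm u(z)\cdot\bm e \right\}.
\]
So it suffices to fix an arbitrary unit vector $\bm e$ and show that $\bm u(x)\cdot\bm e \le \sup_{\partial\Omega} \bm u\cdot\bm e$ for all $x\in\Omega$.

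The key observation is that the scalar function $v := \bm u\cdot\bm e$ is harmonic: since $\laplace$ acts componentwise and $\bm e$ is constant, $\laplace v = (\laplace\bm u)\cdot\bm e = 0$ in $\Omega$, and $v\in C^2(\Omega)\cap C(\overline\Omega)$. Then I would invoke the classical weak maximum principle for the scalar Laplace equation: $\max_{\overline\Omega} v = \max_{\partial\Omega} v$. Hence $v(x)\le \max_{\partial\Omega} v = \sup_{z\in\partial\Omega}\bm u(z)\cdot\bm e$ for every $x\in\Omega$.

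Combining the two steps: for every $\bm e\in S^{N-1}$ and every $x\in\Omega$ we have $\bm u(x)\cdot\bm e \le \sup_{z\in\partial\Omega}\bm u(z)\cdot\bm e$, which by the half-space representation above says precisely that $\bm u(x)\in\convhull\bm u(\partial\Omega)$. Since $\overline\Omega$ is compact and $\bm u$ continuous, $\bm u(\partial\Omega)$ is compact so its convex hull is already closed, and one could equally phrase the argument with finitely many directions via Carath\'eodory if one prefers, but it is not needed.

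There is essentially no hard part here — this is the motivating warm-up case. The only thing to be slightly careful about is the standard fact that the closed convex hull of a compact set is the intersection of the half-spaces containing it (equivalently, that a point not in the convex hull is strictly separated from it by a hyperplane), which is a direct consequence of Hahn--Banach separation in finite dimensions. The genuine difficulties — lack of a maximum principle applied "in every direction" when the system is coupled through $\mathbb A$, and the need to work with weak solutions and test functions rather than pointwise arguments — appear only in the later theorems, where projecting onto a half-space must be realized at the level of the weak formulation rather than by taking scalar components.
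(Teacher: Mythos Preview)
Your proof is correct and follows essentially the same route as the paper: fix a unit direction, observe that the scalar component $v=\bm u\cdot\bm e$ is harmonic, apply the scalar weak maximum principle, and use the Hahn--Banach half-space characterization of the closed convex hull to conclude. The only cosmetic difference is that you state the half-space representation of $\convhull\bm u(\partial\Omega)$ at the outset, whereas the paper verifies it at the end; your extra remarks on compactness and Carath\'eodory are harmless but unnecessary.
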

\begin{proof}
Choose a vector $h\in\R^N$, $\abs{h}=1$. Denote $v(x)=h\cdot \bm u(x)$. Then by a simple calculation, $\laplace v(x) = h\cdot \laplace \bm u(x)=0$. Thus $v \in C^2(\Omega)\cap C(\overline \Omega)$ satisfies the scalar Laplace equation and thus by the maximum principle for scalar-valued Laplace equation, $v(\Omega)\subset (-\infty, \max_{\partial  \Omega} v]$. This in fact means that
\begin{equation*}
    \bm u(\Omega)\subset \left\{x\in\R^N: h\cdot x\leq \max_{y\in \bm u(\partial \Omega)} h\cdot y \right\}.
\end{equation*}
Hence
\begin{equation*}
    \bm u(\Omega)\subset \bigcap_{h\in\R^N, \abs{h}=1}\left\{x\in\R^N:x\cdot h\leq \max_{y\in \bm u(\partial \Omega)} y\cdot h \right\}=:M.
\end{equation*}
It remains to show that $M=\convhull \bm u(\partial \Omega)$. 

Clearly $\convhull \bm u(\partial\Omega)\subset M$, because $M$ is an intersection of convex sets, each containing $\bm u(\partial\Omega)$. 
On the other hand, $\convhull \bm u(\partial \Omega)$ is the intersection of all closed half-spaces containing $\bm u(\partial\Omega)$, which follows from the Hahn-Banach theorem, as we can separate any closed convex set $K\subset\R^N$ from a point $x_0\in\R^N\setminus K$ by a closed half-space $H$ (meaning that $K\subset H$ and $x_0\notin H$). Every closed half-space is of the form \[H_{h,\ell}:=\{x\in\R^N:x\cdot h \leq \ell\}\] for some $h\in\R^N$, $\abs h = 1$ and $\ell\in\R$. If $\ell<\max\limits_{y\in \bm u(\partial \Omega)}y\cdot h$, then $H_{h,\ell}\not\subset \bm u(\partial\Omega) $. So $M\subset \convhull \bm u(\partial\Omega)$ and the proof is finished.
\end{proof}

Note that the proof uses heavily the symmetry of the Laplace operator and thus cannot be easily adopted to other problems.

\subsection{Scalar case and motivation of the method}

To motivate our method, we hereby redo the classic proof of weak maximum principle for weak solutions of linear elliptic equations.
We state and prove it in such a way as to give a guide in how proceed for the convex hull property in the case of systems of equations. For simplicity, we omit the lower order terms.

\begin{theorem}[Maximum principle for linear second-order elliptic PDE]\label{thm:MPweaksol2}
Consider the problem 
\begin{equation*}
     -\div(A\nabla u)=0 \quad \text{in }\Omega,
\end{equation*}
where $A\in L^\infty (\Omega; \R^{n\times n})$ is uniformly elliptic, that is, for some $\lambda>0$ and a.e. $x\in\Omega$
\begin{equation*}
   A(x)\xi \cdot\xi\geq \lambda\abs{\xi}^2, \quad \xi\in\R^n.
\end{equation*}
Let $u\in W^{1,2}(\Omega;\R^N)$ be a weak solution of this problem, that is,
\begin{equation*}
    \int_\Omega A \nabla u \cdot \nabla \varphi \dx=0, \quad \varphi\in W^{1,2}_0(\Omega).
\end{equation*}
Denote
\begin{equation*}
    M=\esssup_{\partial\Omega} u ,\quad m=\essinf_{\partial\Omega} u.
\end{equation*}
Then $u\in [m,M]$ a.e. in $\Omega$.
\end{theorem}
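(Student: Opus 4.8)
The plan is to run the classical De Giorgi--Stampacchia truncation argument, written so that every step has a vectorial counterpart (here $u$ is scalar-valued, since $m,M\in\R$). First dispose of the trivial case: if $M=+\infty$ there is nothing to prove for the upper bound, so assume $M<\infty$, and set
\[
\varphi:=(u-M)^+=\max(u-M,0).
\]
By the chain rule for compositions of Lipschitz functions with Sobolev functions, $\varphi\in W^{1,2}(\Omega)$ with $\nabla\varphi=\nabla u\,\mathbf 1_{\{u>M\}}$ a.e., and in particular $\nabla\varphi=0$ a.e. on $\{u\le M\}$ (recall $\nabla u=0$ a.e. on $\{u=M\}$). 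The crucial point is that in fact $\varphi\in W^{1,2}_0(\Omega)$: this is exactly where the hypothesis $M=\esssup_{\partial\Omega}u$ enters, since then the trace of $u$ on $\partial\Omega$ is $\le M$ a.e., hence the trace of $\varphi$ vanishes a.e. on $\partial\Omega$, and on a bounded Lipschitz domain a $W^{1,2}$-function with vanishing trace belongs to $W^{1,2}_0(\Omega)$.

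Next, use $\varphi$ as a test function in the weak formulation. Since $\nabla\varphi$ is supported in $\{u>M\}$, where it coincides with $\nabla u$,
\[
0=\int_\Omega A\nabla u\cdot\nabla\varphi\dx=\int_\Omega A\nabla\varphi\cdot\nabla\varphi\dx\ge\lambda\int_\Omega\abs{\nabla\varphi}^2\dx\ge0,
\]
using uniform ellipticity. Hence $\nabla\varphi=0$ a.e. in $\Omega$, and since $\varphi\in W^{1,2}_0(\Omega)$ the Poincaré inequality forces $\varphi\equiv 0$, i.e. $u\le M$ a.e. in $\Omega$.

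For the lower bound, apply the upper bound to $-u$, which is again a weak solution of $-\div(A\nabla(-u))=0$ and satisfies $\esssup_{\partial\Omega}(-u)=-m$; this yields $-u\le -m$, i.e. $u\ge m$ a.e. Combining the two bounds gives $u\in[m,M]$ a.e. in $\Omega$.

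The only genuinely non-routine ingredient is the claim $(u-M)^+\in W^{1,2}_0(\Omega)$, which hinges on identifying ``$\esssup$ on $\partial\Omega$'' (a trace notion) with a truncation lying in $W^{1,2}_0$; the truncation chain rule, the ellipticity estimate and the Poincaré inequality are all standard. In the vectorial problem this step is replaced by the analogous statement for $\bm u$ minus its projection onto the relevant closed convex set, which is the heart of the matter.
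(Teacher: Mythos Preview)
Your proof is correct and follows essentially the same truncation argument as the paper. The only cosmetic difference is that the paper packages both bounds into a single test function $w=(u-M)^+-(u-m)^-$ and reads off $(u-M)^+=0$ and $(u-m)^-=0$ simultaneously, whereas you treat the upper bound with $\varphi=(u-M)^+$ and then obtain the lower bound by applying the result to $-u$; the underlying mechanism (chain rule for truncations, vanishing trace, ellipticity, Poincar\'e) is identical.
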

\begin{proof}
We put $w= (u-M)^+ - (u-m)^-$, then $w\in W^{1,2}(\Omega)$ because positive and negative part are 1-Lipschitz functions. Moreover, $w$ has zero trace since $m\leq u\leq M$ a.e. on $\partial\Omega$, so even $w\in W^{1,2}_0(\Omega)$ and we can use it as a test function. So we obtain, since $\nabla u =\nabla (u-M)^+$  on $\supp (u-M)^+$ and 
$\nabla u =-\nabla (u-m)^-$  on $\supp (u-m)^-$: 
\begin{multline*}
    0=\int_\Omega A\nabla u \cdot \nabla w\dx =\int_\Omega A \nabla(u-M)^+ \cdot \nabla(u-M)^+ \dx +\int_\Omega A \nabla (u-m)^-\cdot \nabla(u-m)^- \dx 
    \\\geq \lambda\norm{\nabla(u-M)^+}_2^2 + \lambda\norm{\nabla(u-m)^-}_2^2 \geq \tilde\lambda\norm{(u-M)^+}_{W^{1,2}}^2 + \tilde\lambda\norm{(u-m)^-}_{W^{1,2}}^2.
\end{multline*}
Hence
\begin{equation*}
    (u-M)^+=0 \quad\text{and}\quad (u-m)^-=0 \quad\text{a.e. in }\Omega,
\end{equation*}
which simply means that $u\in[m,M]$ a.e. in $\Omega$.
\end{proof}

Let us now comment on how the approach in this proof can be generalised to systems of equations. If we denote $K=[m,M]$, then $K$ is the closed convex hull of the (essential) boundary values of $u$. Let $\Pi_K\colon \R\to K$ be the projection
to $K$, that is, 
\begin{equation*}
    \Pi_K(x)=\argmin_{y\in K}\abs{y-x}= x-(x-M)^+ +(x-m)^-.
\end{equation*}
If $w= (u-M)^+ - (u-m)^-$ as in the proof above, we have $\Pi_K u = u-w$. Hence the test function used in the proof is $w=u-\Pi_K u$. So the idea when $\bm u$ takes values in $\R^N$ can be analogous: take $K$ to be the closed convex hull of the boundary values of $\bm u$, and if $\Pi_K\colon \R^N\to K$ is the projection of $\R^N$ to $K$, use $\bm u-\Pi_K \bm u$ as the test function and try to prove that $\bm u=\Pi_K \bm u$ in $W^{1,2}(\Omega;\R^N)$. 

But there are some caveats. In the proof of Theorem \ref{thm:MPweaksol2} we used that $\nabla u = \nabla (u-M)^+ -\nabla(u-m)^-$, provided $u\notin [m,M]$. However, in $\R^N$, it is not true that $\nabla \bm u=\nabla (\bm u-\Pi_K \bm u)$ if $\bm u\notin K$. It only holds that (as we will see in Section \ref{chap:proj})
\begin{equation}\label{eqn:motivgradineq}
    \nabla \bm u : \nabla\piku \geq \abs{\nabla\Pi_K \bm u}^2.
\end{equation}  We can try to prove the convex hull property only using this inequality instead, but we may run into problems. For instance, if the highest-order term in the equation is $-\div(\mathbb A\nabla \bm u)$, then the corresponding term after using the test function $\bm u-\piku$ is $\int_\Omega \mathbb A\nabla \bm u : \nabla(\bm u-\piku)\dx$. First, we would want $\nabla \piku$ in place of $\nabla \bm u$, but if $\mathbb A$ is linear and elliptic, we may write it as
\begin{equation*}
    \int_\Omega \mathbb A\nabla\bm u : \nabla(\bm u-\piku)\dx=\int_\Omega \mathbb A\nabla (\bm u-\piku) : \nabla(\bm u-\piku)\dx+\int_\Omega \mathbb A\nabla \piku : \nabla(\bm u-\piku)\dx
\end{equation*}
and estimate the first term by ellipticity of $\mathbb A$. 
But the second term cannot be directly estimated by (\ref{eqn:motivgradineq}) because of the coefficients $\mathbb A$. However, if the modify the projection $\Pi_K$ so that it will instead yield the inequality 
\begin{equation*}
    \mathbb A\nabla\piku : \nabla (\bm u-\piku)\geq 0,
\end{equation*}
we could estimate the second term. A plausible idea would be to take a different inner product on $\R^N$ and make the projection with respect to it. But we cannot take ``inner product with respect to $\mathbb A$'' because it even formally doesn't make sense -- $\mathbb A$ is a fourth-order tensor in $\R^{(N\times n)\times (N\times n)}$, not an $N\times N$ matrix. This indicates the need for the decoupling of the coefficients as in Theorem \ref{thm:CHPellipticlin}. That this condition cannot be relaxed is demonstrated by Example \ref{ex:counterexell}.  

\section{Projections to convex sets}\label{chap:proj}

In this section we develop the theory for projections of Sobolev functions to closed convex sets. First, we review some rather well-known properties of pointwise projections. Second, we apply this to Sobolev functions, using the approximation by difference quotients. The crucial result to be used in the following proofs of the convex hull property is the inequality in Proposition \ref{prop:gradineq}.

\subsection{Properties of projections}

We review some well-known properties of projections to closed convex sets in\footnote{Note that the same results hold in a Hilbert space.} $\R^N$.

\begin{definition}
Let $A\in \R^{N\times N}$ be a symmetric positive definite matrix, denote by $\scalar{v}{w}_A:= v\cdot Aw$ the corresponding inner product on $\R^N$ and $\abs v _A:=\sqrt {\scalar{v}{v}_A}$ the induced norm. Let $K\subset \R^N$ be a nonempty closed convex set. We define the projection of $\R^N$ to $K$, denoted by $\Pi^A_K\colon \R^N \to K$, by setting for $x\in \R^N$
\begin{equation*}
    \Pi^A_K x=\argmin_{y\in K} \abs{x-y}_A.
\end{equation*}
It is straightforward to check that $\Pi^A_K$ is uniquely defined. If $A=I$ the identity matrix and thus we have the Euclidean inner product on $\R^N$, we write $\Pi_K^I=:\Pi_K$.
\end{definition}

\begin{proposition}\label{prop:convexprop}
Let $K\subset \R^N$ be a nonempty closed convex set. Then for all $x\in \R^N$ and all $z\in K$
\begin{equation*}\label{eqn:productprop}
\scalar{x-\Pi^A_K x}{z -\Pi^A_K x}_A\leq 0.
\end{equation*} 
\end{proposition}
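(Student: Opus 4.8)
The plan is to prove the variational characterisation of the projection onto a closed convex set, adapted to the inner product $\scalar{\cdot}{\cdot}_A$. I would first recall why $\Pi^A_K x$ is well-defined: the function $y\mapsto \abs{x-y}_A^2$ is strictly convex and coercive on the closed convex set $K$, so it attains its minimum at a unique point, which we call $p:=\Pi^A_K x$. The claim to establish is then the obtuse-angle inequality $\scalar{x-p}{z-p}_A\leq 0$ for every $z\in K$.

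The key step is a standard first-order variation argument. Fix $z\in K$ and for $t\in[0,1]$ set $y_t:=(1-t)p+tz=p+t(z-p)$, which lies in $K$ by convexity. By minimality of $p$ we have $\abs{x-p}_A^2\leq \abs{x-y_t}_A^2$ for all such $t$. Expanding the right-hand side using bilinearity of $\scalar{\cdot}{\cdot}_A$ gives
\begin{equation*}
\abs{x-y_t}_A^2=\abs{x-p}_A^2-2t\scalar{x-p}{z-p}_A+t^2\abs{z-p}_A^2.
\end{equation*}
Subtracting $\abs{x-p}_A^2$ and dividing by $t>0$ yields $0\leq -2\scalar{x-p}{z-p}_A+t\abs{z-p}_A^2$. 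Letting $t\to 0^+$ gives $\scalar{x-p}{z-p}_A\leq 0$, which is exactly the assertion.

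There is no real obstacle here; the only thing to be careful about is that the argument only uses forward differences $t\to 0^+$ (since $y_t$ need not stay in $K$ for $t<0$), so one cannot conclude equality, only the inequality — but that is precisely what is claimed. One should also note that $A$ being symmetric positive definite is used implicitly in that $\scalar{\cdot}{\cdot}_A$ is a genuine inner product, so that the expansion above is valid and $\abs{\cdot}_A$ is a norm making the minimisation problem strictly convex and coercive; this is already part of the hypotheses via the preceding definition.
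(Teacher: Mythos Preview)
Your proof is correct and follows essentially the same approach as the paper: both use the convex combination $y_t=(1-t)\Pi^A_K x + tz\in K$, expand the minimality inequality $\abs{x-\Pi^A_K x}_A^2\leq\abs{x-y_t}_A^2$, divide by the parameter, and let it tend to $0^+$. Your expansion via $x-y_t=(x-p)-t(z-p)$ is slightly more direct than the paper's decomposition $(1-h)(x-\Pi^A_K x)+h(x-z)$, but the argument is the same.
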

\begin{proof}
Let $x\in \R^N$ and $z\in K$. We have $\Pi^A_K x = \argmin_{y\in K} \scalar{x-y}{x-y}_A$. Thus for any $h\in(0,1)$ we obtain (since $(1-h)\Pi^A_K x+ hz\in K$ from convexity of $K$):
\begin{multline*}
    \scalar{x-\Pi^A_K x}{x-\Pi^A_K x}_A \leq \scalar{x-((1-h)\Pi^A_K x+ hz)}{x-((1-h)\Pi^A_K x+ hz)}_A \\=\scalar{(1-h)(x-\Pi^A_K x)+h(x-z)}{(1-h)(x-\Pi^A_K x)+h(x-z)}_A\\=(1-h)^2\scalar{x-\Pi^A_K x}{x-\Pi^A_K x}_A+h^2\scalar{x-z}{x-z}_A+2h(1-h)\scalar{x-\Pi^A_K x}{x-z}_A.
\end{multline*}
So 
\begin{equation*}
    (2h-h^2)\scalar{x-\Pi^A_K x}{x-\Pi^A_K x}_A \leq h^2\scalar{x-z}{x-z}+2h(1-h)\scalar{x-\Pi^A_K x}{ x-z}_A.
\end{equation*}
Dividing by $h$ gives
\begin{equation*}
    (2-h)\scalar{x-\Pi^A_K x}{x-\Pi^A_K x}_A \leq h\scalar{x-z}{x-z}_A+2(1-h)\scalar{x-\Pi^A_K x}{ x-z}_A.
\end{equation*}
Passing to the limit as $h\to 0^+$ and rearranging the terms yields the result.
\end{proof}

\begin{corollary}\label{cor:projineq}
Let $K\subset \R^N$ be a nonempty convex closed set. Then for all $x,y\in \R^N 
$
\begin{equation*}
    \scalar{x-y}{\Pi^A_K x -\Pi^A_K y}_A\geq \abs{\Pi^A_K x -\Pi^A_K y}_A^2.
\end{equation*}
\end{corollary}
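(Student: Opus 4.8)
The plan is to derive the inequality from Proposition~\ref{prop:convexprop} alone, by invoking it twice and combining the two instances; this is the standard ``firm nonexpansiveness'' argument. Throughout I would abbreviate $p := \Pi^A_K x$ and $q := \Pi^A_K y$, noting that $p, q \in K$.

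First I would apply Proposition~\ref{prop:convexprop} with the point $x$ and the admissible choice $z = q$, obtaining $\scalar{x - p}{q - p}_A \leq 0$. Then I would apply it again with the point $y$ and the choice $z = p$, obtaining $\scalar{y - q}{p - q}_A \leq 0$, which I rewrite as $-\scalar{y - q}{q - p}_A \leq 0$. Adding the two inequalities and using bilinearity of $\scalar{\cdot}{\cdot}_A$ gives $\scalar{(x - y) - (p - q)}{q - p}_A \leq 0$, hence $\scalar{x - y}{q - p}_A \leq \scalar{p - q}{q - p}_A = -\abs{p - q}_A^2$. Negating both sides and replacing $q - p$ by $p - q$ in the second argument yields $\scalar{x - y}{p - q}_A \geq \abs{p - q}_A^2$, which is precisely the assertion since $\abs{p - q}_A = \abs{q - p}_A$.

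I do not expect any real obstacle: the argument is purely algebraic once Proposition~\ref{prop:convexprop} is available, and the only thing to watch is the sign bookkeeping when combining the two variational inequalities, which is clean because $A$ is symmetric positive definite, so that $\scalar{\cdot}{\cdot}_A$ is a genuine symmetric bilinear form and the expansion of $\scalar{(x-y)-(p-q)}{q-p}_A$ is legitimate. As a consistency check, when $x, y \in K$ one has $p = x$, $q = y$ and both sides equal $\abs{x - y}_A^2$; and the Euclidean case $A = I$, $K = \R^N$ trivially reduces to an identity.
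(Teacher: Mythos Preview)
Your proof is correct and follows essentially the same approach as the paper: both arguments apply Proposition~\ref{prop:convexprop} twice, once at $x$ with $z=\Pi^A_K y$ and once at $y$ with $z=\Pi^A_K x$, and then combine the resulting inequalities algebraically. The only difference is cosmetic bookkeeping of signs; the substance is identical.
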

\begin{proof}
From Proposition \ref{prop:convexprop}, putting $z:=\Pi^A_K y \in K$, we obtain the inequality
\begin{equation*}
    \scalar{x-\Pi^A_K x}{\Pi^A_K x -\Pi^A_K y}_A\geq 0.
\end{equation*}
Switching the roles of $x$ and $y$, we also have
\begin{equation*}
    \scalar{y-\Pi^A_K y}{\Pi^A_K y -\Pi^A_K x}_A\geq 0.
\end{equation*}
Subtracting these two inequalities and adding $\abs{\Pi^A_K x -\Pi^A_K y}_A^2$ yields the result.
\end{proof}

\begin{corollary}\label{cor:projlipsch}
Let $K\subset \R^N$ be a nonempty convex closed set. Then $\Pi^A_K\colon \R^N\to \R^N$ is 1-Lipschitz.
\end{corollary}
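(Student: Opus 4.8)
The plan is to derive this immediately from Corollary~\ref{cor:projineq} combined with the Cauchy--Schwarz inequality for the inner product $\scalar{\cdot}{\cdot}_A$, understanding ``1-Lipschitz'' in the sense of the norm $\abs{\cdot}_A$ (which is the natural reading here, since that is the norm with respect to which the projection is defined).

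First I would fix $x,y\in\R^N$ and invoke Corollary~\ref{cor:projineq}, which gives
\[
\abs{\Pi^A_K x -\Pi^A_K y}_A^2 \leq \scalar{x-y}{\Pi^A_K x -\Pi^A_K y}_A .
\]
Next I would estimate the right-hand side by the Cauchy--Schwarz inequality associated with $\scalar{\cdot}{\cdot}_A$ (legitimate since $A$ is symmetric positive definite, so $\scalar{\cdot}{\cdot}_A$ is a genuine inner product), obtaining
\[
\abs{\Pi^A_K x -\Pi^A_K y}_A^2 \leq \abs{x-y}_A\,\abs{\Pi^A_K x -\Pi^A_K y}_A .
\]
Finally I would split into two cases: if $\abs{\Pi^A_K x -\Pi^A_K y}_A = 0$, the desired bound $\abs{\Pi^A_K x -\Pi^A_K y}_A \leq \abs{x-y}_A$ holds trivially; otherwise one divides both sides of the last inequality by the positive quantity $\abs{\Pi^A_K x -\Pi^A_K y}_A$ and concludes.

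The argument is essentially a one-liner, so there is no substantial obstacle; the only point demanding a little care is the degenerate case $\Pi^A_K x = \Pi^A_K y$, where division is not permitted but the statement is immediate. One may additionally note that, since all norms on $\R^N$ are equivalent, this also yields Lipschitz continuity of $\Pi^A_K$ with respect to the Euclidean norm (with a constant depending on $A$), and that taking $A=I$ recovers the familiar fact that $\Pi_K$ is $1$-Lipschitz in the Euclidean norm.
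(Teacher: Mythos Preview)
Your proof is correct and follows precisely the paper's own route: the paper also states that the result is immediate from Corollary~\ref{cor:projineq} together with the Cauchy--Schwarz inequality. Your additional remarks on the degenerate case and on equivalence of norms are fine but not needed.
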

\begin{proof} Immediate from Corollary \ref{cor:projineq} and the Cauchy-Schwarz inequality.
\end{proof}

\subsection{Projections of Sobolev functions}
In this section we study projections of $\R^N$-valued Sobolev functions to closed convex sets. For this we recall a standard result on the convergence of difference quotients.

\begin{definition}[Difference quotients]
For a function $\bm u\colon \Omega\to \R^N$ we define
\begin{equation*}
    \Delta^h_i \bm u(x)=\frac{\bm u(x+he_i)-\bm u(x)}{h},
\end{equation*}
for all $0\neq h\in \R$, $i\in\{1,\dots,n\}$, and $x\in\Omega$ such that $x+he_i\in\Omega$, where $e_i\in \R^n$ is the $i$-th canonical basis vector.
\end{definition}

\begin{proposition}[Difference quotients and weak partial derivative]\label{prop:differencequotients}
Let $\bm u\in W^{1,p}(\Omega;\R^N)$, $1\leq p < \infty$. Then it holds that
\begin{equation*}
    \Delta^h_i \bm u \to \pd{\bm u}{x_i} \quad\text{in }L_{\operatorname{loc}}^p(\Omega; \R^N), \text{ as }h\to 0.
\end{equation*}
In particular, 
\begin{equation*}
    \Delta^h_i \bm u \to \pd{\bm u}{x_i} \quad\text{a.e. in }\Omega,\text{ as }h\to 0.
\end{equation*}
\end{proposition}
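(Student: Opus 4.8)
The plan is to reduce the whole statement to the elementary representation
\begin{equation*}
\Delta^h_i \bm u(x)=\frac1h\int_0^h \pd{\bm u}{x_i}(x+se_i)\dx[s],
\end{equation*}
valid for a.e.\ $x$ provided that $x$ and the segment joining it to $x+he_i$ stay inside $\Omega$. Concretely I would fix a subdomain $\Omega'\Subset\Omega$, work only with $0<\abs h<\dist(\Omega',\partial\Omega)$, and establish the identity first for $\bm u\in C^\infty(\Omega;\R^N)$ by the fundamental theorem of calculus applied componentwise along the $i$-th coordinate line, and then for general $\bm u\in W^{1,p}(\Omega;\R^N)$ by mollifying, $\bm u_\epsilon:=\bm u*\rho_\epsilon$ on a slightly larger compactly contained subdomain, writing the identity for $\bm u_\epsilon$, and passing to the limit $\epsilon\to0$ using $\bm u_\epsilon\to\bm u$ and $\nabla\bm u_\epsilon\to\nabla\bm u$ in $L^p$ on that subdomain. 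This reduction — the passage to a compactly contained subdomain together with the accompanying restriction on $\abs h$, so that difference quotients and mollifications are well defined — is the only genuinely delicate point; everything after it is soft.

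Granting the identity, the $L^p_{\mathrm{loc}}$ convergence is immediate from Minkowski's integral inequality: for $\Omega'\Subset\Omega$ and $\abs h$ small,
\begin{equation*}
\norm{\Delta^h_i\bm u-\pd{\bm u}{x_i}}_{L^p(\Omega';\R^N)}\leq\sup_{\abs s\leq\abs h}\norm{\pd{\bm u}{x_i}(\cdot+se_i)-\pd{\bm u}{x_i}}_{L^p(\Omega';\R^N)},
\end{equation*}
and the right-hand side tends to $0$ as $h\to0$ by the strong continuity of translations on $L^p$ for $1\leq p<\infty$. Exhausting $\Omega$ by subdomains $\Omega'\Subset\Omega$ then gives convergence in $L^p_{\mathrm{loc}}(\Omega;\R^N)$, as claimed.

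For the pointwise statement I would again use the representation directly rather than merely extracting a subsequence. For a.e.\ $x\in\Omega$ the slice $s\mapsto \pd{\bm u}{x_i}(x+se_i)$ is locally integrable near $s=0$, and by Fubini the set of those $x$ for which, in addition, $s=0$ is a Lebesgue point of this slice has full measure. At every such $x$ the Lebesgue differentiation theorem gives
\begin{equation*}
\Delta^h_i\bm u(x)=\frac1h\int_0^h\pd{\bm u}{x_i}(x+se_i)\dx[s]\longrightarrow\pd{\bm u}{x_i}(x)\qquad\text{as }h\to0,
\end{equation*}
which is the assertion. Alternatively, and sufficiently for all applications in this paper, one simply notes that $L^p_{\mathrm{loc}}$ convergence yields a sequence $h_k\to0$ along which $\Delta^{h_k}_i\bm u\to\pd{\bm u}{x_i}$ a.e.\ in $\Omega$.
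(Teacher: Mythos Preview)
Your proof is correct and complete. The paper, however, does not give its own argument for this proposition: it simply records the result as standard and refers to Gilbarg--Trudinger for a proof. So there is nothing to compare your approach against, beyond noting that what you have written is one of the standard textbook arguments, and in fact supplies more than the paper does.

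One small comment on the a.e.\ convergence: your primary argument via the Lebesgue differentiation theorem on one-dimensional slices is correct and yields the honest full-limit statement $h\to 0$ (not merely along a subsequence), which is what the paper states and later invokes in the proof of Proposition~\ref{prop:gradineq}. Your parenthetical alternative --- extracting a subsequence from the $L^p_{\mathrm{loc}}$ convergence --- would indeed also suffice for that application, since the conclusion there is a pointwise inequality and passing to the limit along any subsequence converging a.e.\ is enough; but it is strictly weaker than the proposition as stated. If you want to match the statement exactly, keep the Lebesgue-point argument as the main line.
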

This result is standard and can be found in any textbook treating Sobolev spaces, e.g. \cite{gilbargEllipticPartialDifferential2001}.
Now let us formulate a version of Corollary \ref{cor:projineq} for projections of Sobolev functions. Henceforth, by writing $\Pi_K \bm u$ we mean the composition $\Pi_K \circ \bm u$.

\begin{proposition}\label{prop:gradineq}
Let $1\leq p<\infty$ , $\bm u\in W^{1,p}(\Omega;\R^N)$ and let $K\subset \R^N$ be nonempty, closed and convex. Then 
\begin{equation}\label{eqn:gradineq}
    \nabla \bm u:\nabla \Pi_K \bm u\geq \abs{\nabla\Pi_K \bm u}^2 \quad \text{a.e. in }\Omega.
\end{equation}
\end{proposition}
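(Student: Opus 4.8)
The plan is to reduce the statement to the pointwise Hilbert-space inequality of Corollary~\ref{cor:projineq} (with $A=I$) applied to difference quotients, and then pass to the limit using Proposition~\ref{prop:differencequotients}. First I would fix $i\in\{1,\dots,n\}$, $0\neq h\in\R$, and a point $x\in\Omega$ at distance more than $|h|$ from $\partial\Omega$. Applying Corollary~\ref{cor:projineq} to the two points $\bm u(x+he_i)$ and $\bm u(x)$ in $\R^N$ gives
\begin{equation*}
    \bigl(\bm u(x+he_i)-\bm u(x)\bigr)\cdot\bigl(\Pi_K\bm u(x+he_i)-\Pi_K\bm u(x)\bigr)\geq \abs{\Pi_K\bm u(x+he_i)-\Pi_K\bm u(x)}^2 .
\end{equation*}
Dividing both sides by $h^2>0$ rewrites this purely in terms of difference quotients:
\begin{equation*}
    \Delta^h_i\bm u(x)\cdot\Delta^h_i\bigl(\Pi_K\bm u\bigr)(x)\geq \abs{\Delta^h_i\bigl(\Pi_K\bm u\bigr)(x)}^2 \quad\text{for a.e. }x\in\Omega .
\end{equation*}

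Next I would let $h\to 0$. Since $\bm u\in W^{1,p}(\Omega;\R^N)$, Proposition~\ref{prop:differencequotients} gives $\Delta^h_i\bm u\to\partial\bm u/\partial x_i$ a.e.\ in $\Omega$ along a suitable sequence $h\to 0$. For the composition $\Pi_K\bm u$, one first needs that it again lies in $W^{1,p}(\Omega;\R^N)$: this follows because $\Pi_K$ is $1$-Lipschitz (Corollary~\ref{cor:projlipsch}), so $\Pi_K\bm u\in W^{1,p}$ by the standard chain rule for Lipschitz-times-Sobolev compositions, and moreover $\abs{\nabla(\Pi_K\bm u)}\leq\abs{\nabla\bm u}$ a.e. Then Proposition~\ref{prop:differencequotients} applied to $\Pi_K\bm u$ yields $\Delta^h_i(\Pi_K\bm u)\to\partial(\Pi_K\bm u)/\partial x_i$ a.e., again passing to a further subsequence if necessary. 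Taking the a.e.\ limit in the displayed inequality gives, for each $i$,
\begin{equation*}
    \pd{\bm u}{x_i}\cdot\pd{(\Pi_K\bm u)}{x_i}\geq \abs[\Big]{\pd{(\Pi_K\bm u)}{x_i}}^2 \quad\text{a.e. in }\Omega .
\end{equation*}
Summing over $i=1,\dots,n$ produces exactly $\nabla\bm u:\nabla\Pi_K\bm u\geq\abs{\nabla\Pi_K\bm u}^2$ a.e., which is \eqref{eqn:gradineq}.

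The main technical point — and the step I would be most careful about — is the juggling of subsequences: the a.e.\ convergence in Proposition~\ref{prop:differencequotients} is stated along $h\to 0$, but to evaluate both sides of the pointwise inequality simultaneously at a.e.\ $x$ one should extract a single sequence $h_m\to 0$ along which both $\Delta^{h_m}_i\bm u$ and $\Delta^{h_m}_i(\Pi_K\bm u)$ converge a.e.; this is harmless since $L^p_{\mathrm{loc}}$ convergence always admits an a.e.-convergent subsequence, and the limit is independent of the subsequence (it is the weak derivative). A secondary point is justifying $\Pi_K\bm u\in W^{1,p}(\Omega;\R^N)$ so that Proposition~\ref{prop:differencequotients} is applicable to it at all; invoking the $1$-Lipschitz property from Corollary~\ref{cor:projlipsch} together with the chain rule for superposition of a Lipschitz map with a Sobolev function handles this, and also records the bound $\abs{\nabla\Pi_K\bm u}\leq\abs{\nabla\bm u}$ which, incidentally, is a consequence of \eqref{eqn:gradineq} and Cauchy–Schwarz but can be obtained independently. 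Everything else is elementary algebra with the inner product.
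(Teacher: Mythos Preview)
Your proposal is correct and follows essentially the same route as the paper: apply Corollary~\ref{cor:projineq} to difference quotients, divide by $h^2$, invoke Corollary~\ref{cor:projlipsch} to put $\Pi_K\bm u$ in $W^{1,p}$, pass to the limit via Proposition~\ref{prop:differencequotients}, and sum over $i$. Your discussion of subsequences is more cautious than the paper's (which simply asserts a.e.\ convergence as $h\to 0$ without qualification), but the underlying argument is the same.
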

\begin{remark}
We will often be using the inequality (\ref{eqn:gradineq}) in the form
\begin{equation*}
    \nabla (\bm u-\piku):\nabla \piku \geq 0.
\end{equation*}
\end{remark}
\begin{proof}
Let $x\in\Omega$ and choose $i\in\{1,\dots,n\}$. Find $\delta>0$ such that for every $|h|<\delta$ we have $x+he_i\in\Omega$. Then for such $h$, Corollary \ref{cor:projineq} gives 
\begin{equation*}
    (\bm u(x+he_i)-\bm u(x))\cdot (\Pi_K \bm u(x+he_i)-\Pi_K \bm u(x))\geq \abs{\Pi_K \bm u(x+he_i)-\Pi_K \bm u(x)}^2.
\end{equation*}
After dividing by $h^2$, this reads with the difference quotient notation as
\begin{equation*}
    \Delta^h_i \bm u(x)\cdot \Delta^h_i \Pi_K \bm u(x)\geq \abs{\Delta^h_i \Pi_K \bm u(x)}^2.
\end{equation*}
By Corollary \ref{cor:projlipsch}, $\Pi_K \bm u\in W^{1,p}(\Omega;\R^N)$. Hence by Proposition \ref{prop:differencequotients},
 $\Delta^h_i \bm u \to \pd{\bm u}{x_i}$ and $\Delta^h_i \Pi_K \bm u\to \pd{\Pi_K \bm u}{x_i}$ a.e. in $\Omega$, as $h\to 0$.
Thus passing to the limit as $h\to 0$ gives for a.e. $x\in\Omega$ 
\begin{equation*}
    \pd{\bm u}{x_i}(x)\cdot \pd{\Pi_K \bm u}{x_i}(x)\geq \abs{\pd{\Pi_K \bm u}{x_i}(x)}^2.
\end{equation*} 
Summing over $i=1,\dots,n$ gives 
\begin{equation*}
    \sum_{i=1}^n \pd{\bm u}{x_i}(x)\cdot \pd{\Pi_K \bm u}{x_i} (x)\geq \sum_{i=1}^n \abs{\pd{\Pi_K \bm u}{x_i}(x)}^2
\end{equation*}
for a.e. $x\in\Omega$, which is exactly our claim.
\end{proof}

\begin{proposition}\label{prop:gradineqA}
Let $\bm u\in W^{1,p}(\Omega;\R^N)$, $1\leq p<\infty$ and let $A\in \R^{N\times N}$ be a positive definite matrix. Let $K\subset \R^N$ be nonempty, and closed convex. Then for a.e. $x\in \Omega$ it holds
\begin{equation*}
\scalar{\pd{\bm u}{x_i}(x)}{\pd{\Pi_K^A \bm u}{x_i}(x)}_A\geq \abs{\pd{\Pi_K^A \bm u}{x_i}(x)}_A^2,\quad i=1,\dots,n.
\end{equation*}
\end{proposition}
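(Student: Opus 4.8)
The plan is to transcribe the proof of Proposition \ref{prop:gradineq} almost verbatim, replacing the Euclidean inner product by $\scalar{\cdot}{\cdot}_A$ throughout and invoking the $A$-version of the projection inequality, namely Corollary \ref{cor:projineq}. The one preliminary point I would check is that $\Pi_K^A \bm u$ belongs to $W^{1,p}(\Omega;\R^N)$: by Corollary \ref{cor:projlipsch} the map $\Pi_K^A$ is $1$-Lipschitz with respect to $\abs{\cdot}_A$, and since $A$ is positive definite the norm $\abs{\cdot}_A$ is equivalent to the Euclidean one, so $\Pi_K^A$ is Lipschitz in the usual sense and therefore $\Pi_K^A \bm u \in W^{1,p}(\Omega;\R^N)$.

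Next I would fix $x\in\Omega$ and $i\in\{1,\dots,n\}$, pick $\delta>0$ such that $x+he_i\in\Omega$ whenever $\abs h<\delta$, and apply Corollary \ref{cor:projineq} to the pair of points $\bm u(x+he_i),\,\bm u(x)$. Dividing the resulting inequality by $h^2$ and using bilinearity of $\scalar{\cdot}{\cdot}_A$ together with the difference-quotient notation turns it into
\begin{equation*}
\scalar{\Delta^h_i \bm u(x)}{\Delta^h_i \Pi_K^A \bm u(x)}_A \geq \abs{\Delta^h_i \Pi_K^A \bm u(x)}_A^2.
\end{equation*}
By Proposition \ref{prop:differencequotients}, $\Delta^h_i \bm u \to \pd{\bm u}{x_i}$ and $\Delta^h_i \Pi_K^A \bm u\to \pd{\Pi_K^A \bm u}{x_i}$ a.e.\ in $\Omega$ as $h\to 0$, so passing to the limit along a null sequence $h\to 0$ and using continuity of $\scalar{\cdot}{\cdot}_A$ and $\abs{\cdot}_A$ gives the asserted inequality at a.e.\ $x\in\Omega$, for that fixed $i$; intersecting the $n$ full-measure sets obtained for $i=1,\dots,n$ yields the statement.

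I do not expect a genuine obstacle here: the argument is a direct copy of the proof of Proposition \ref{prop:gradineq}, and it is in fact slightly shorter since there is no summation over $i$ — the conclusion is already pointwise in each coordinate direction. The only thing worth spelling out explicitly is the norm-equivalence remark ensuring $\Pi_K^A\bm u$ is a Sobolev function, which is what makes Proposition \ref{prop:differencequotients} applicable.
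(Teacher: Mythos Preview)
Your proposal is correct and follows exactly the approach in the paper: the paper's proof simply states that one proceeds as in the proof of Proposition~\ref{prop:gradineq}, replacing the Euclidean inner product by $\scalar{\cdot}{\cdot}_A$, the projection $\Pi_K$ by $\Pi_K^A$, and omitting the final summation over $i$. Your explicit remark that $\Pi_K^A\bm u\in W^{1,p}(\Omega;\R^N)$ via norm equivalence is a small but useful addition the paper leaves implicit.
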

\begin{proof}
We proceed as in the proof of Proposition \ref{prop:gradineq}, replacing the Euclidean inner product $\cdot$ with $\scalar{\,}{\,}_A$ and the projection $\Pi_K$ with $\Pi_K^A$, and omitting the last summation over $i$.
\end{proof}

\begin{proposition}\label{prop:projorth}
Let $1\leq p<\infty$ , $\bm u\in W^{1,p}(\Omega;\R^N)$ and let $K\subset \R^N$ be nonempty, closed and convex. Then for $i=1,\dots,n$ it holds
\begin{equation}\label{eqn:projorth}
    \pd{\piku}{x_i}\cdot(\bm u-\piku)=0 \quad \text{a.e. in }\Omega.
\end{equation}
\end{proposition}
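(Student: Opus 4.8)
The plan is to mimic the difference-quotient argument from the proof of Proposition \ref{prop:gradineq}, but now exploiting the full one-sided inequality of Proposition \ref{prop:convexprop} rather than the symmetrized consequence in Corollary \ref{cor:projineq}. First I would record that $\piku \in W^{1,p}(\Omega;\R^N)$ by Corollary \ref{cor:projlipsch}, so that by Proposition \ref{prop:differencequotients} both $\Delta^h_i \bm u$ and $\Delta^h_i \piku$ converge a.e.\ in $\Omega$ to the respective weak partial derivatives as $h\to 0$.

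Fix $i\in\{1,\dots,n\}$ and a point $x\in\Omega$ at which $\bm u$ and $\piku$ are defined, together with $\delta>0$ such that $x+he_i\in\Omega$ whenever $|h|<\delta$. Applying Proposition \ref{prop:convexprop} (with $A=I$) to the point $\bm u(x)$ and the element $z:=\Pi_K\bm u(x+he_i)\in K$ gives
\begin{equation*}
    (\bm u(x)-\piku(x))\cdot\bigl(\Pi_K\bm u(x+he_i)-\piku(x)\bigr)\leq 0.
\end{equation*}
Dividing by $h$, this reads $(\bm u(x)-\piku(x))\cdot\Delta^h_i\piku(x)\leq 0$ for $h\in(0,\delta)$, and $(\bm u(x)-\piku(x))\cdot\Delta^h_i\piku(x)\geq 0$ for $h\in(-\delta,0)$, since dividing an inequality by a negative number reverses it.

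Now I would pass to the limit separately from the right and from the left. By Proposition \ref{prop:differencequotients}, $\Delta^h_i\piku(x)\to\pd{\piku}{x_i}(x)$ for a.e.\ $x\in\Omega$ as $h\to 0$; letting $h\to 0^+$ in the first inequality yields $(\bm u-\piku)\cdot\pd{\piku}{x_i}\leq 0$ a.e.\ in $\Omega$, and letting $h\to 0^-$ in the second yields the reverse inequality a.e.\ in $\Omega$. Combining the two gives exactly (\ref{eqn:projorth}).

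The only genuinely delicate point is the treatment of the two one-sided limits, together with the a.e.\ convergence of difference quotients: if one only has $L^p_{\mathrm{loc}}$-convergence and extracts subsequences $h_k\to 0^\pm$ to obtain a.e.\ convergence, the exceptional null set may depend on the sequence, but since only two sequences are used their union is still null, so no difficulty arises. Everything else is a direct consequence of the convexity inequality in Proposition \ref{prop:convexprop}.
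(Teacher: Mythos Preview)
Your proof is correct and follows essentially the same approach as the paper: both apply Proposition \ref{prop:convexprop} at the point $\bm u(x)$ with $z=\Pi_K\bm u(x\pm he_i)$ to obtain the two one-sided inequalities, then pass to the limit via Proposition \ref{prop:differencequotients}. The only cosmetic difference is that the paper keeps $h>0$ and uses the two shifts $x\pm he_i$, whereas you fix the shift $x+he_i$ and let $h$ take both signs; these are equivalent, and your remark about the null sets for the two one-sided limits is a nice extra bit of care.
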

\begin{proof}
  We proceed similarly as before. By applying twice Proposition \ref{prop:convexprop} we have (for $x\in\Omega$ and $h>0$ small enough so that $x\pm he_i\in\Omega$)
    \begin{align*}
    ( \Pi_K \bm u(x) -
    \Pi_K \bm u(x-he_i) )\cdot (\bm u(x) - \Pi_K \bm u(x))  &\geq 0,
    \\
    ( \Pi_K \bm u(x) -
    \Pi_K \bm u(x+he_i) ) \cdot (\bm u(x) - \Pi_K \bm u(x)) &\geq 0.
  \end{align*}
  Dividing by $h$ and passing to the limit $h\to 0$, Proposition \ref{prop:differencequotients} gives for a.e. $x\in\Omega$
    \begin{align*}
    \pd{\piku}{x_i}(x)\cdot (\bm u(x) - \Pi_K \bm u(x))  &\geq 0,
    \\
    -\pd{\piku}{x_i}(x) \cdot (\bm u(x) - \Pi_K \bm u(x)) &\geq 0.
  \end{align*}
  Hence the result.
\end{proof}

Now, since we will be concerned with parabolic equations, we need also to treat the time-derivative term. The lack of enough time regularity to perform the difference-quotient approach can be overcome by mollification in time, as will be demonstrated below.
\begin{lemma}
  Let $\bm u\in C^1([0,T]; W^{1,p}(\Omega; \R^N))$ with $1<p<\infty$ and let $K\subset\R^N$ be nonempty, closed and convex. Then  $ \Pi_K \bm u \in W^{1,\infty}((0,T);W^{1,p}(\Omega; \R^N))$ and for a.e. $t\in(0,T)$ we have 
  \begin{equation*}
    (\bm u(t) - \Pi_K \bm u(t)) \cdot \partial_t \Pi_K \bm u(t) = 0 \quad \text{ a.e. in } \Omega.
  \end{equation*}
\end{lemma}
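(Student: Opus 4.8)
The plan is to mimic the difference-quotient argument of Propositions \ref{prop:gradineq} and \ref{prop:projorth}, but in the time variable, where the assumed regularity $\bm u\in C^1([0,T];W^{1,p}(\Omega;\R^N))$ lets us differentiate in $t$ directly without needing mollification after all. First I would establish the membership $\Pi_K\bm u\in W^{1,\infty}((0,T);W^{1,p}(\Omega;\R^N))$: since $\Pi_K$ is $1$-Lipschitz (Corollary \ref{cor:projlipsch}) and $t\mapsto\bm u(t)$ is Lipschitz from $[0,T]$ into $W^{1,p}(\Omega;\R^N)$ — being $C^1$ on a compact interval — the composition $t\mapsto\Pi_K\bm u(t)$ is Lipschitz from $[0,T]$ into $L^p(\Omega;\R^N)$, hence in $W^{1,\infty}((0,T);L^p(\Omega;\R^N))$; combined with the spatial bound $\Pi_K\bm u(t)\in W^{1,p}(\Omega;\R^N)$ uniformly in $t$ (again from $1$-Lipschitzness in $x$, as in Proposition \ref{prop:gradineq}), this gives the claimed space.

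For the pointwise identity, I would apply Proposition \ref{prop:convexprop} twice, at fixed $x\in\Omega$, comparing $\bm u(x)=\bm u(t,x)$ against the points $\Pi_K\bm u(t\pm h,x)\in K$:
\begin{align*}
(\Pi_K\bm u(t,x)-\Pi_K\bm u(t-h,x))\cdot(\bm u(t,x)-\Pi_K\bm u(t,x)) &\geq 0, \\
(\Pi_K\bm u(t,x)-\Pi_K\bm u(t+h,x))\cdot(\bm u(t,x)-\Pi_K\bm u(t,x)) &\geq 0.
\end{align*}
Dividing by $h>0$ and letting $h\to 0^+$ turns the difference quotients of $\Pi_K\bm u$ in $t$ into $\partial_t\Pi_K\bm u$; the two inequalities then become $\partial_t\Pi_K\bm u(t,x)\cdot(\bm u(t,x)-\Pi_K\bm u(t,x))\geq 0$ and its negation, forcing equality. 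The subtlety is in justifying the passage to the limit: $\Delta^h_t\bm u=\tfrac1h(\bm u(t+h)-\bm u(t))\to\partial_t\bm u(t)$ in $W^{1,p}(\Omega;\R^N)$ by $C^1$-regularity, and since $\Pi_K$ is $1$-Lipschitz, $\Delta^h_t\Pi_K\bm u$ is bounded in $W^{1,p}$ and converges weakly; but we want an almost-everywhere-in-$\Omega$ statement, so I would instead extract, for a.e.\ $t$, a sequence $h_k\to0$ along which $\Delta^{h_k}_t\Pi_K\bm u(t)\to\partial_t\Pi_K\bm u(t)$ and $\Delta^{h_k}_t\bm u(t)\to\partial_t\bm u(t)$ a.e.\ in $\Omega$ (passing from the strong $L^p$ convergence, which holds for a.e.\ $t$ by Lebesgue differentiation applied to the Bochner integral $t\mapsto\bm u(t)$, to a.e.\ pointwise convergence along a subsequence). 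Passing to the limit along $h_k$ in the two displayed inequalities then gives the identity for a.e.\ $x$, for a.e.\ $t$.

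The main obstacle I expect is precisely this interplay between ``a.e.\ $t$'' and ``a.e.\ $x$'': one must be careful that the exceptional null set in $\Omega$ does not depend measurably-badly on $t$, and that $\partial_t\Pi_K\bm u$ — which a priori exists only as a weak/Bochner derivative in the $W^{1,p}(\Omega)$-valued sense — actually coincides a.e.\ in $(0,T)\times\Omega$ with the pointwise limit of difference quotients used above. The cleanest route is to fix $t$ in the full-measure set of Lebesgue points of $t\mapsto\partial_t\bm u(t)\in W^{1,p}$ and of $t\mapsto\partial_t\Pi_K\bm u(t)\in L^p$, note that $\Delta^h_t\bm u(t)\to\partial_t\bm u(t)$ and $\Delta^h_t\Pi_K\bm u(t)\to\partial_t\Pi_K\bm u(t)$ strongly in $L^p(\Omega;\R^N)$ at every such $t$, extract an a.e.-convergent subsequence, and conclude; the Lipschitz-in-$t$ bound from Corollary \ref{cor:projlipsch} ensures $\partial_t\Pi_K\bm u\in L^\infty((0,T);L^p(\Omega;\R^N))$, which together with the uniform spatial bound closes the regularity claim.
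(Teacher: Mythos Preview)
Your argument for the orthogonality identity mirrors the paper's: apply Proposition~\ref{prop:convexprop} with $z=\Pi_K\bm u(t\pm h,x)\in K$ to get the two one-sided inequalities, divide by $h$, and pass to the limit along a subsequence on which the time difference quotients of $\Pi_K\bm u$ converge a.e.\ in~$\Omega$. The paper obtains this subsequence by first showing that $t\mapsto\Pi_K\bm u(t)$ is Lipschitz into $W^{1,p}(\Omega;\R^N)$ and then invoking Rademacher's theorem (using reflexivity of $W^{1,p}$ for $1<p<\infty$) to get $\partial_t\Pi_K\bm u(t)\in W^{1,p}(\Omega;\R^N)$ for a.e.\ $t$; your Lebesgue-point route is equivalent in spirit and adequate for the identity.

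There is, however, a gap in your regularity argument. Lipschitz continuity of $t\mapsto\Pi_K\bm u(t)$ into $L^p(\Omega;\R^N)$ together with a uniform-in-$t$ bound $\Pi_K\bm u(t)\in W^{1,p}(\Omega;\R^N)$ gives only
\[
\Pi_K\bm u\in W^{1,\infty}((0,T);L^p(\Omega;\R^N))\cap L^\infty((0,T);W^{1,p}(\Omega;\R^N)),
\]
which does \emph{not} imply $\Pi_K\bm u\in W^{1,\infty}((0,T);W^{1,p}(\Omega;\R^N))$: the latter requires the time derivative $\partial_t\Pi_K\bm u(t)$ itself to lie in $W^{1,p}(\Omega;\R^N)$, not merely in $L^p$. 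The paper addresses this point by estimating $\norm{\Pi_K\bm u(t)-\Pi_K\bm u(s)}_{W^{1,p}}\leq\norm{\bm u(t)-\bm u(s)}_{W^{1,p}}$ directly, via the chain rule and $|\nabla\Pi_K|\leq 1$, so as to obtain Lipschitz continuity \emph{into} $W^{1,p}$ before applying Rademacher. Your weaker conclusion $\partial_t\Pi_K\bm u\in L^\infty((0,T);L^p)$ is in fact all that is used in the proof of the identity and downstream in Proposition~\ref{prop:timederiv}, but it does not establish the lemma as stated.
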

\begin{proof}
Using that $\Pi_K$ is 1-Lipschitz (Corollary \ref{cor:projlipsch}) and the chain rule we get the estimate 
\begin{equation*}
    \abs{\nabla (\piku(t,x))}= \abs{(\nabla \Pi_K)(\bm u(t,x))\nabla \bm u(x,t)}\leq  \abs{(\nabla \Pi_K)(\bm u(t,x))}\abs{\nabla \bm u(x,t)}\leq \abs{\nabla \bm u(t,x)},
\end{equation*}
so using this we get for $t,s\in[0,T]$
\begin{multline*}
    \norm{\piku(t)-\piku(s)}_{W^{1,p}}^p = \norm{\piku (t)-\piku(s)}_p^p +\norm{\nabla(\piku (t))-\nabla(\piku(s))}_p^p \\
    \leq  \norm{\bm u (t)-\bm u(s)}_p^p +\norm{\nabla(\bm u (t))-\nabla(\bm u(s))}_p^p = \norm{\bm u(t)-\bm u(s)}_{W^{1,p}}^p\leq \norm{\partial_t \bm u}_{C([0,T];W^{1,p}(\Omega;\R^N))}^p(t-s)^p.
\end{multline*}
Therefore $\piku$ is Lipschitz in time (treated as a function $\piku \colon (0,T)\to W^{1,p}(\Omega;\R^N)$), so that by Rademacher theorem\footnote{Here we use that $W^{1,p}(\Omega;\R^N)$ is reflexive.} $\partial_t \Pi_K \bm u(t)\in  W^{1,p}(\Omega; \R^N)$ exists for a.e. $t\in(0,T)$ and it is also the weak time derivative. Thus for such $t$ we have a sequence $h_j\to 0^+$ such that for $j\to \infty$ and a.e. $x\in \Omega$
\begin{align*}
\frac{\piku(t+h_j,x)-\piku(t,x)}{h_j} \to \partial_t\piku (t,x), \\
\frac{\piku(t-h_j,x)-\piku(t,x)}{-h_j} \to \partial_t\piku (t,x).
\end{align*}
Further, we have by Proposition \ref{prop:convexprop} for a.e. $x\in\Omega$
  \begin{align*}
    \big(\bm u(t,x) - \Pi_K \bm u(t,x)\big) \cdot \big( \Pi_K \bm u(t,x) - \Pi_K \bm u(t-h_j,x) \big) &\geq 0,
    \\
    \big(\bm u(t,x) - \Pi_K \bm u(t,x)\big) \cdot \big( \Pi_K \bm u(t,x) - \Pi_K \bm u(t+h_j,x) \big) &\geq 0.
  \end{align*}
Dividing by $h_j$ and passing to the limit $j\to \infty$ we get
  \begin{align*}
    \big(\bm u(t,x) - \Pi_K \bm u(t,x)\big) \cdot \partial_t \Pi_K \bm u(t,x)
    &\geq 0,
    \\
    -\big(\bm u(t,x) - \Pi_K \bm u(t,x)\big) \cdot \partial_t \Pi_K \bm u(t,x)
    &\geq 0.
  \end{align*}
  This proves the claim.
\end{proof}
\begin{proposition}\label{prop:timederiv}
  Let $\bm u\in L^p((0,T); W^{1,p}(\Omega;\R^N))\cap C([0,T];L^2(\Omega; \R^N))$ with \\ $\partial_t \bm u \in L^{p'}((0,T);W^{-1,p'}(\Omega;\R^N))$ and let $K\subset \R^N$ be a closed convex set satisfying $\bm u(t,x)\in K$ for a.e. $(t,x)\in(0,T)\times \partial\Omega$. Then for $0\leq t_1<t_2\leq T$
  \begin{equation*}
   \int_{t_1}^{t_2}\scalar{\partial_t \bm u(t)}{\bm u(t) - \Pi_K \bm u(t)}\dt =\int_\Omega \frac{\abs{\bm u(t_2)-\Pi_K\bm u(t_2)}^2}{2}\dx-\int_\Omega \frac{\abs{\bm u(t_1)-\Pi_K\bm u(t_1)}^2}{2}\dx.
  \end{equation*}
  In particular, for a.e. $t\in (0,T)$ it holds 
  \begin{equation*}
    \scalar{\partial_t \bm u(t)}{\bm u(t) - \Pi_K \bm u(t)}=\partial_t \int_\Omega \frac{\abs{\bm u(t)-\Pi_K\bm u(t)}^2}{2}\dx.
  \end{equation*}
\end{proposition}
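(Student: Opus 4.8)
The plan is to prove the identity by a standard density/mollification argument in the time variable, reducing to the smooth case handled in the preceding lemma. First I would fix $0\leq t_1 < t_2\leq T$ and recall that the function $t\mapsto \tfrac12\int_\Omega \abs{\bm u(t)-\Pi_K\bm u(t)}^2\dx$ is well-defined and continuous, since $\bm u\in C([0,T];L^2(\Omega;\R^N))$ and $\Pi_K$ is $1$-Lipschitz (Corollary \ref{cor:projlipsch}), so $\Pi_K\bm u \in C([0,T];L^2(\Omega;\R^N))$ as well. Thus the right-hand side makes sense; the content is the integration-by-parts formula on the left.

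Next I would introduce a time-mollification $\bm u_\epsilon$ of $\bm u$ (mollifying in $t$ only, say after extending $\bm u$ suitably near the endpoints, or working on a slightly smaller interval and then passing to the limit $t_1,t_2$). The key facts are: $\bm u_\epsilon\to\bm u$ in $L^p((0,T);W^{1,p}(\Omega;\R^N))$, $\partial_t \bm u_\epsilon\to\partial_t\bm u$ in $L^{p'}((0,T);W^{-1,p'}(\Omega;\R^N))$, $\bm u_\epsilon\to\bm u$ in $C([0,T];L^2(\Omega;\R^N))$, and $\bm u_\epsilon\in C^1([0,T];W^{1,p}(\Omega;\R^N))$ (mollification gains time regularity). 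For the mollified functions, the previous lemma applies and gives $(\bm u_\epsilon(t)-\Pi_K\bm u_\epsilon(t))\cdot\partial_t\Pi_K\bm u_\epsilon(t)=0$ a.e., hence pointwise in $t$,
\begin{equation*}
\scalar{\partial_t \bm u_\epsilon(t)}{\bm u_\epsilon(t)-\Pi_K\bm u_\epsilon(t)} = \scalar{\partial_t(\bm u_\epsilon(t)-\Pi_K\bm u_\epsilon(t))}{\bm u_\epsilon(t)-\Pi_K\bm u_\epsilon(t)} = \partial_t \int_\Omega \frac{\abs{\bm u_\epsilon(t)-\Pi_K\bm u_\epsilon(t)}^2}{2}\dx,
\end{equation*}
where the duality pairing is legitimate since $\bm u_\epsilon(t)-\Pi_K\bm u_\epsilon(t)\in W^{1,p}_0(\Omega;\R^N)$ (it has zero trace because $\bm u(t,x)\in K$ a.e.\ on $\partial\Omega$ passes to the mollification, so $\Pi_K\bm u_\epsilon(t)=\bm u_\epsilon(t)$ there — this point needs a small argument, or alternatively one checks $\Pi_K\bm u(t)-\bm u(t)$ has zero trace directly and mollifies that). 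Integrating this identity over $[t_1,t_2]$ yields the claimed formula for $\bm u_\epsilon$.

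Finally I would pass to the limit $\epsilon\to 0$. The right-hand side converges because $\bm u_\epsilon(t_i)\to\bm u(t_i)$ in $L^2$ and $\Pi_K$ is $1$-Lipschitz, so $\int_\Omega\abs{\bm u_\epsilon(t_i)-\Pi_K\bm u_\epsilon(t_i)}^2\dx\to\int_\Omega\abs{\bm u(t_i)-\Pi_K\bm u(t_i)}^2\dx$. For the left-hand side I would write $\partial_t\bm u_\epsilon\to\partial_t\bm u$ in $L^{p'}((0,T);W^{-1,p'})$ and $\bm u_\epsilon-\Pi_K\bm u_\epsilon\to\bm u-\Pi_K\bm u$ in $L^p((0,T);W^{1,p}_0)$ — the latter using that $\Pi_K$ applied to an $L^p(W^{1,p})$-convergent sequence converges (continuity of the superposition operator $v\mapsto\Pi_K v$ on $W^{1,p}$, which follows from the chain-rule bound $\abs{\nabla\Pi_K v}\leq\abs{\nabla v}$ together with $\Pi_K v_\epsilon\to\Pi_K v$ pointwise and dominated convergence, or from a known Sobolev-continuity result for Lipschitz superposition). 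Then the duality pairing passes to the limit and gives the identity for $\bm u$. The ``in particular'' statement follows by dividing by $t_2-t_1$ and using the Lebesgue differentiation theorem, since both sides are integrals of $L^1((0,T))$ functions of $t$. The main obstacle I expect is the technical bookkeeping at the endpoints for the mollification (handling the time interval near $0$ and $T$ so that $\bm u_\epsilon\in C^1([0,T];\cdot)$) and rigorously justifying the $L^p(W^{1,p})$-continuity of $v\mapsto\Pi_K v$ needed to pass to the limit on the left-hand side; everything else is routine.
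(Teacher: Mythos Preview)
Your proposal is correct and follows essentially the same route as the paper: mollify in time, apply the preceding lemma to the regularised $\bm u_\epsilon$ to obtain the identity at the approximate level, then pass to the limit using the convergences in $L^p((0,T);W^{1,p})$, $L^{p'}((0,T);W^{-1,p'})$ and $C([0,T];L^2)$, handling the endpoints $t_1=0$, $t_2=T$ by a further limiting argument. The technical points you flag (endpoint bookkeeping and the $L^p(W^{1,p})$-continuity of $v\mapsto\Pi_K v$) are exactly the ones the paper also has to deal with.
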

\begin{proof}
The strategy is to mollify in time and use the previous lemma. Let first $0<t_1<t_2<T$. Set $\bm u_\delta = \bm u* \psi_\delta$ where $*$ is convolution in time, i.e. $\bm u_\delta(t,x)=\int_{t-\delta}^{t+\delta} \bm u(s,x)\psi_\delta(t-s)\dx[s]$, where $\psi_\delta\in C^\infty_c(\R)$ is the standard mollification kernel with radius $\delta$, and $\delta <\min(t_1,T-t_2)$. Then by a standard result on mollification we have $\bm u_\delta\in C^\infty([t_1,t_2]; W^{1,p}(\Omega;\R^N))$ and $\bm u_\delta\in C^\infty([t_1,t_2]; L^2(\Omega;\R^N))$. So using the previous lemma, we find that
  \begin{equation*}
      \int_{t_1}^{t_2}\int_\Omega\partial_t \bm u_\delta \cdot (\bm u_\delta - \piku_\delta)\dx\dt = \int_{t_1}^{t_2}\int_\Omega(\partial_t \bm u_\delta - \partial_t \piku_\delta) \cdot (\bm u_\delta - \piku_\delta)\dx\dt.
  \end{equation*}
  Further, as $\bm u_\delta - \piku_\delta \in W^{1,\infty}((0,T);L^2(\Omega;\R^n))$, which can be shown analogously as in the previous lemma, we obtain
\begin{equation*}
 \int_{t_1}^{t_2}\int_\Omega(\partial_t \bm u_\delta - \partial_t \piku_\delta) \cdot (\bm u_\delta - \piku_\delta)\dx\dt = \int_{t_1}^{t_2}\frac 12 \partial_t \int_\Omega \abs{\bm u_\delta - \piku_\delta}^2\dx\dt,
  \end{equation*}
and finally  \begin{equation*}
      \int_{t_1}^{t_2}\frac 12 \partial_t \int_\Omega \abs{\bm u_\delta - \piku_\delta}^2\dx\dt = \int_\Omega \frac{\abs{\bm u_\delta(t_2)-\Pi_K\bm u_\delta(t_2)}^2}{2}\dx-\int_\Omega \frac{\abs{\bm u_\delta(t_1)-\Pi_K\bm u_\delta(t_1)}^2}{2}\dx.
  \end{equation*}
We have thus shown that the desired equality holds for $\bm u_\delta$, so now it remains to pass to the limit $\delta \to 0$.

Note first that by the assumption on $K$ it holds $\bm u(t)-\piku(t) \in W^{1,p}_0(\Omega;\R^N)$ for a.e. $t$. Then by standard results on mollification we get $\bm u_\delta \to \bm u$ in $L^p((0,T);W^{1,p}_0(\Omega;\R^N))$, and consequently by $\Pi_K$ being Lipschitz, also $\piku_\delta \to \piku$ in $L^p((0,T);W^{1,p}(\Omega;\R^N))$. By the same result on mollification we obtain $\partial_t \bm u_\delta \to \partial_t \bm u$ in $L^{p'}((0,T);W^{-1,p'}(\Omega;\R^N))$, since it holds $\partial_t \bm u_\delta = \partial_t (\bm u*\psi_\delta)= (\partial_t \bm u)*\psi_\delta$. Thus we can pass to the limit on the left hand side: 
\begin{equation*}
\int_{t_1}^{t_2}\int_\Omega\partial_t \bm u_\delta \cdot (\bm u_\delta - \piku_\delta)\dx\dt \to 
\int_{t_1}^{t_2}\langle \partial_t \bm u , \bm u - \piku\rangle \dt.
\end{equation*}

Further, by mollification we have $\bm u_\delta \to \bm u$ in $C([0,T];L^2(\Omega;\R^N))$, and also $\piku_\delta \to \piku$ in $C([0,T];L^2(\Omega;\R^N))$, as $\Pi_K$ is Lipschitz. This allows to pass to the limit on the right hand side: 
\begin{multline*}
\int_\Omega \frac{\abs{\bm u_\delta(t_2)-\Pi_K\bm u_\delta(t_2)}^2}{2}\dx-\int_\Omega \frac{\abs{\bm u_\delta(t_1)-\Pi_K\bm u_\delta(t_1)}^2}{2}\dx \\
\to 
\int_\Omega \frac{\abs{\bm u(t_2)-\Pi_K\bm u(t_2)}^2}{2}\dx-\int_\Omega \frac{\abs{\bm u(t_1)-\Pi_K\bm u(t_1)}^2}{2}\dx.
\end{multline*}

This concludes the proof if $0<t_1<t_2<T$. Passing to the limits $t_1\to 0$ or $t_2\to T$, using on the left Lebesgue dominated convergence theorem and on the right $\bm u,\piku \in C([0,T];L^2(\Omega;\R^N)$, we conclude the proof for all cases $0\leq t_1<t_2\leq T$.

Finally, we note that the quantity on the left is clearly absolutely continuous in time, so for a.e. $t\in (0,T)$ we can differentiate to obtain the second expression.
\end{proof}
\section{Convex hull property}\label{chap:CHP}

This chapter covers our two results on the convex hull property, namely for the elliptic and nonlinear parabolic systems.

\subsection{Linear elliptic systems}
In this section we prove a convex hull property for linear elliptic systems satisfying suitable condition on the coefficients. 
In particular we consider the system
\begin{equation}\label{eqn:ellsys}
	-\div (\mathbb A\nabla \bm u)=\bm 0\quad \text{in }\Omega,
\end{equation}
with coefficients $\mathbb A=(\mathbb A_{ij}^{\alpha\beta})_{i,j=1,\dots,n}^{\alpha,\beta=1,\dots,N} \in L^\infty (\Omega;\R^{(N\times n)\times (N\times n)})$, written in components as
\begin{equation*}
-\sum_{\beta=1}^N\sum_{i,j=1}^n\pd{}{x_i}\left( \mathbb A^{\alpha\beta}_{ij}(x) \pd{u^{\beta}}{x_j}(x) \right) = 0,\quad \text{a.e. }x\in\Omega,\ \alpha=1,\dots, N.
\end{equation*}

\begin{definition}[Weak solution]
By a \emph{weak solution} to \eqref{eqn:ellsys} we mean $\bm u\in W^{1,2}(\Omega; \R^N)$ which satisfies
\begin{equation}\label{eqn:elldiagweak}
   \int_\Omega \mathbb A\nabla \bm u  : \nabla \bm \varphi \dx =0
\end{equation}
for all $\bm \varphi\in W^{1,2}_0(\Omega;\R^N)$.
Written in components, \eqref{eqn:elldiagweak} reads as 
\begin{equation*}
    \int_\Omega  \sum_{\alpha,\beta=1}^N \sum_{i,j=1}^n \mathbb{A}^{\alpha\beta}_{ij}(x)\pd{u^{\beta}}{x_j}  (x) \pd{\varphi^{\alpha}}{x_i} (x)\dx =0.
\end{equation*}
\end{definition}

To formulate the convex hull property for weak solutions, we now precisely define the convex hull of boundary values for Sobolev functions.

\begin{definition}[Convex hull of boundary values]\label{def:CHPell}
 Let $\bm u\in W^{1,p}(\Omega; \R^N)$. Then we define \emph{closed convex hull of boundary values} $\convhull \bm u(\partial\Omega) \subset \R^N$ to be the smallest closed convex set such that $\bm u(x)\in \convhull \bm u(\partial\Omega)$ for a.e. $x\in \partial\Omega$.
Equivalently,
\begin{equation*}
\convhull \bm u(\partial\Omega) = \bigcap \left\{K\subset \R^N: K\text{ closed convex},\bm u(x)\in K \text{ for a.e. }x\in\partial \Omega \right\},
\end{equation*}
thus it is well-defined.
\end{definition}

\begin{remark}\label{rem:essbdval-ell}
Alternatively, one can define the $\convhull \bm u(\partial\Omega)$ as the closed convex hull of the essential boundary values of $\bm u$. A $y\in \R^N$ is an \emph{essential boundary value} of $\bm u$ if for all $\epsilon>0$, the preimage $\bm u|_{\partial\Omega}^{-1}(B_\epsilon(y))$ has positive surface measure, where $\bm u|_{\partial \Omega}$ denotes the trace of $\bm u$ on $\partial \Omega$, and $B_\epsilon(y)$ is the $\epsilon$-ball centred at $y$. It is not hard to check that this definition is equivalent to the one given above.
\end{remark}

\begin{theorem}[Convex hull property for linear elliptic systems]\label{thm:CHPellipticlin}

Suppose that $\mathbb A=(\mathbb A_{ij}^{\alpha\beta})_{i,j=1,\dots,n}^{\alpha,\beta=1,\dots,N} \in L^\infty (\Omega;\R^{(N\times n)\times (N\times n)})$ has the form 
\begin{equation*}
    \mathbb A_{ij}^{\alpha\beta}(x)=\mathcal{A}^{\alpha\beta} a_{ij}(x),
\end{equation*}
where $\mathcal{A}=(\mathcal{A}^{\alpha\beta})_{\alpha,\beta=1}^N \in\R^{N\times N}$ is symmetric positive definite and $\bm a=(a_{ij})_{i,j=1}^n\in L^\infty(\Omega;\R^{n\times n})$ is symmetric and uniformly positive definite, that is, there is $\lambda>0$ such that for a.e. $x\in\Omega$
\begin{equation}\label{eqn:aunifell}
    \forall \xi\in\R^n:\xi \cdot \bm a(x)\xi \geq \lambda \abs{\xi}^2.
\end{equation}
Then every weak solution $\bm u$ to \eqref{eqn:ellsys} satisfies the convex hull property $\bm u(x)\in \convhull \bm u(\partial\Omega)$ for a.e. $x\in\Omega$.
\end{theorem}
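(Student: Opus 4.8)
The plan is to follow the strategy laid out in the motivation section: use $\bm u - \Pi_K^{\mathcal A}\bm u$ as a test function, where $K = \convhull \bm u(\partial\Omega)$ and the projection is taken with respect to the inner product $\scalar{\cdot}{\cdot}_{\mathcal A}$ induced by the symmetric positive definite matrix $\mathcal A$. First I would check that $\bm w := \bm u - \Pi_K^{\mathcal A}\bm u$ is an admissible test function: since $\Pi_K^{\mathcal A}$ is $1$-Lipschitz (Corollary \ref{cor:projlipsch}) and $\bm u \in W^{1,2}(\Omega;\R^N)$, we have $\Pi_K^{\mathcal A}\bm u \in W^{1,2}(\Omega;\R^N)$ and hence $\bm w \in W^{1,2}(\Omega;\R^N)$; moreover $\bm u(x) \in K$ for a.e.\ $x \in \partial\Omega$ forces $\Pi_K^{\mathcal A}\bm u = \bm u$ on $\partial\Omega$, so $\bm w$ has zero trace, i.e.\ $\bm w \in W^{1,2}_0(\Omega;\R^N)$.

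Next I would plug $\bm\varphi = \bm w$ into the weak formulation \eqref{eqn:elldiagweak}. Using the tensor-product structure $\mathbb A_{ij}^{\alpha\beta} = \mathcal A^{\alpha\beta} a_{ij}$, the bilinear form factorizes: writing out components,
\begin{equation*}
0 = \int_\Omega \mathbb A \nabla \bm u : \nabla \bm w \dx = \int_\Omega \sum_{i,j=1}^n a_{ij}(x) \sum_{\alpha,\beta=1}^N \mathcal A^{\alpha\beta} \pd{u^\beta}{x_j}\pd{w^\alpha}{x_i} \dx = \int_\Omega \sum_{i,j=1}^n a_{ij}(x) \scalar{\pd{\bm u}{x_j}}{\pd{\bm w}{x_i}}_{\mathcal A} \dx.
\end{equation*}
Now decompose $\bm u = \bm w + \Pi_K^{\mathcal A}\bm u$ in the second slot's gradient, or rather split off $\nabla\bm u = \nabla \bm w + \nabla \Pi_K^{\mathcal A}\bm u$, to obtain
\begin{equation*}
0 = \int_\Omega \sum_{i,j=1}^n a_{ij}\scalar{\pd{\bm w}{x_j}}{\pd{\bm w}{x_i}}_{\mathcal A}\dx + \int_\Omega \sum_{i,j=1}^n a_{ij}\scalar{\pd{\Pi_K^{\mathcal A}\bm u}{x_j}}{\pd{\bm w}{x_i}}_{\mathcal A}\dx.
\end{equation*}
For the first term, fix $x$ and apply uniform positive definiteness of $\bm a$: the map $\xi \mapsto \sum_{ij} a_{ij}(x) \scalar{\xi_j}{\xi_i}_{\mathcal A}$ applied to the $n$-tuple $\xi_i = \partial \bm w/\partial x_i$ is bounded below by $\lambda \sum_i \abs{\partial\bm w/\partial x_i}_{\mathcal A}^2$ — this works because one can diagonalize $\mathcal A = P^T P$ and reduce to the ordinary ellipticity inequality \eqref{eqn:aunifell} applied componentwise. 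Hence the first term is $\geq \lambda \int_\Omega \abs{\nabla(P\bm w)}^2\dx \geq 0$, and it vanishes only if $\nabla(P\bm w) = 0$.

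The key remaining point is that the second, cross term is nonnegative. This is where the choice of the $\mathcal A$-projection pays off. Proposition \ref{prop:gradineqA} gives, for each $i$ and a.e.\ $x$, the pointwise inequality $\scalar{\partial\bm u/\partial x_i}{\partial\Pi_K^{\mathcal A}\bm u/\partial x_i}_{\mathcal A} \geq \abs{\partial\Pi_K^{\mathcal A}\bm u/\partial x_i}_{\mathcal A}^2$, i.e.\ $\scalar{\partial\bm w/\partial x_i}{\partial\Pi_K^{\mathcal A}\bm u/\partial x_i}_{\mathcal A} \geq 0$ — the $\mathcal A$-analogue of the Remark after Proposition \ref{prop:gradineq}. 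But the cross term involves the full matrix $\bm a$ with off-diagonal coupling in $i,j$, so a single pointwise inequality per index is not quite enough. The main obstacle, then, is to show
\begin{equation*}
\sum_{i,j=1}^n a_{ij}(x)\scalar{\pd{\Pi_K^{\mathcal A}\bm u}{x_j}(x)}{\pd{\bm w}{x_i}(x)}_{\mathcal A} \geq 0 \quad \text{a.e. } x.
\end{equation*}
I expect to handle this by exploiting the finer orthogonality-type relation. The correct tool is the $\mathcal A$-version of Proposition \ref{prop:projorth}: $\scalar{\partial\Pi_K^{\mathcal A}\bm u/\partial x_i}{\bm u - \Pi_K^{\mathcal A}\bm u}_{\mathcal A} = 0$ a.e., i.e.\ $\scalar{\partial\Pi_K^{\mathcal A}\bm u/\partial x_i}{\bm w}_{\mathcal A} = 0$ for every $i$. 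Combined with the chain-rule structure $\partial \Pi_K^{\mathcal A}\bm u/\partial x_i = (D\Pi_K^{\mathcal A})(\bm u)\, \partial\bm u/\partial x_i$ — which holds a.e.\ and shows that at a.e.\ point all the vectors $\partial\Pi_K^{\mathcal A}\bm u/\partial x_i$ lie in the image of a single symmetric positive semidefinite linear map (the derivative of the projection, which one checks is $\mathcal A$-self-adjoint and an $\mathcal A$-orthogonal projection onto the tangent space of $K$ at $\Pi_K^{\mathcal A}\bm u(x)$) — one sees that $\partial\bm w/\partial x_i$ and $\partial\Pi_K^{\mathcal A}\bm u/\partial x_i$ are the "normal" and "tangential" components of $\partial\bm u/\partial x_i$ with respect to this fixed decomposition of $\R^N$. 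Writing $\partial\bm u/\partial x_i = \partial\bm w/\partial x_i + \partial\Pi_K^{\mathcal A}\bm u/\partial x_i$ as an $\mathcal A$-orthogonal split into a fixed (pointwise) subspace and its complement, the quadratic form $\sum_{ij} a_{ij}\scalar{\cdot}{\cdot}_{\mathcal A}$ — being positive semidefinite in the $n$-tuple argument — decouples across this orthogonal split, giving $\sum_{ij}a_{ij}\scalar{\partial\bm u/\partial x_j}{\partial\bm u/\partial x_i}_{\mathcal A} = \sum_{ij}a_{ij}\scalar{\partial\bm w/\partial x_j}{\partial\bm w/\partial x_i}_{\mathcal A} + \sum_{ij}a_{ij}\scalar{\partial\Pi_K^{\mathcal A}\bm u/\partial x_j}{\partial\Pi_K^{\mathcal A}\bm u/\partial x_i}_{\mathcal A}$ with no cross terms, and the cross term in the earlier identity equals $\sum_{ij}a_{ij}\scalar{\partial\Pi_K^{\mathcal A}\bm u/\partial x_j}{\partial\Pi_K^{\mathcal A}\bm u/\partial x_i}_{\mathcal A} \geq 0$ by positive semidefiniteness.

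Granting this, the full identity reads $0 = (\text{nonneg}) + (\text{nonneg})$, so both terms vanish; in particular $\lambda\int_\Omega\abs{\nabla(P\bm w)}^2\dx = 0$, whence $\nabla \bm w = 0$ a.e., and since $\bm w \in W^{1,2}_0(\Omega;\R^N)$ on the connected (Lipschitz) domain $\Omega$ we conclude $\bm w \equiv 0$, i.e.\ $\bm u = \Pi_K^{\mathcal A}\bm u \in K = \convhull\bm u(\partial\Omega)$ a.e., which is the claim. I would make sure $\Omega$ being connected is either assumed or noted to follow from "domain"; if not, one argues componentwise on connected components, where the boundary trace on each component still lands in $K$. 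The one subtlety to be careful about in the write-up is justifying the a.e.\ chain rule and the measurable-selection-free argument that the quadratic form decouples — but this can be done at a.e.\ fixed $x$ purely linear-algebraically once Propositions \ref{prop:gradineqA} and (the $\mathcal A$-version of) \ref{prop:projorth} are in hand, so no deep new machinery is needed.
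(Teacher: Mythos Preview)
Your setup and handling of the first (coercive) term are correct and match the paper. The gap is in your treatment of the cross term. The claim that $D\Pi_K^{\mathcal A}$ is, at points of differentiability, an $\mathcal A$-orthogonal projection onto the tangent space of $K$ is false in general: take $\mathcal A = I$ and $K = \overline{B_r(0)}\subset \R^N$; for $\abs{z}>r$ one has $\Pi_K z = r z/\abs{z}$ and $D\Pi_K(z) = \tfrac{r}{\abs{z}}\bigl(I - \hat z\otimes\hat z\bigr)$, a \emph{strict contraction} times a projection, not a projection. Hence the decomposition $\partial_{x_i}\bm u = \partial_{x_i}\bm w + \partial_{x_i}\Pi_K^{\mathcal A}\bm u$ is \emph{not} $\mathcal A$-orthogonal, the mixed terms $\scalar{\partial_{x_j}\Pi_K^{\mathcal A}\bm u}{\partial_{x_i}\bm w}_{\mathcal A}$ with $i\neq j$ need not vanish, and the asserted decoupling fails. (A secondary issue is that the pointwise chain rule for a merely Lipschitz $\Pi_K^{\mathcal A}$ composed with a Sobolev map does not hold without further justification, but this is subordinate to the conceptual error above.)

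The paper closes this gap without any chain rule by diagonalizing the \emph{spatial} factor $\bm a(x)$ pointwise: writing $\bm a(x_0) = \overline Q^{\,T} D\,\overline Q$ with $\overline Q$ orthogonal and setting $y=\overline Q x$, the cross term at $x_0$ becomes the diagonal sum $\sum_k d_k(x_0)\,\scalar{\partial_{y_k}\Pi_K^{\mathcal A}\overline{\bm u}}{\partial_{y_k}(\overline{\bm u}-\Pi_K^{\mathcal A}\overline{\bm u})}_{\mathcal A}$, and each summand is $\geq 0$ directly by Proposition~\ref{prop:gradineqA} applied to $\overline{\bm u}$. Equivalently, the difference-quotient proof of Proposition~\ref{prop:gradineqA} works in every spatial direction $v\in\R^n$, so the symmetric part of the matrix $M_{ij}:=\scalar{\partial_{x_j}\Pi_K^{\mathcal A}\bm u}{\partial_{x_i}\bm w}_{\mathcal A}$ is positive semidefinite a.e., whence $\sum_{i,j}a_{ij}(x)M_{ij}\geq 0$ since $\bm a(x)$ is symmetric positive definite. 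This is the correct replacement for your chain-rule step.
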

\begin{proof}
Put $K=\convhull \bm u(\partial\Omega)$, where this set is understood as in Definition \ref{def:CHPell} and take $\bm w = \bm u-\Pi_K^{\mathcal A} \bm u$. Now, since $\Pi_K^{\mathcal A}$ is Lipschitz by Corollary \ref{cor:projlipsch}, we have that $\bm w\in W^{1,2}(\Omega;\R^N)$. Moreover, because $\bm u=\Pi_K^{\mathcal{A}} \bm u$ on $\partial\Omega$, we have even $\bm w\in W^{1,2}_0(\Omega;\R^N)$. So we can use $\bm w$ as a test function in (\ref{eqn:elldiagweak}) and obtain 
\begin{equation}\label{eqn:testdiag}
	0=\int_\Omega \mathbb A\nabla \bm u : \nabla \bm w \dx=\int_\Omega \mathbb A\nabla (\bm u-\Pi_K^{\mathcal A} \bm u) : \nabla (\bm u-\Pi_K^{\mathcal A} \bm u) \dx+\int_\Omega \mathbb A\nabla \Pi_K^{\mathcal A} \bm u : \nabla (\bm u-\Pi_K^{\mathcal A} \bm u)\dx.
\end{equation}

Now let us for a.e. $x\in\Omega$ diagonalize the positive definite matrix $\bm a(x)$, so that 
\begin{equation*}
    \bm a(x)= Q^T (x)D(x)Q(x),
\end{equation*}
where $Q(x)\in\R^{n\times n} $ is an orthogonal matrix and $D(x)\in\R^{n\times n}$ a diagonal matrix with diagonal entries $d_i(x)$, $i=1,\dots,n$. Moreover, because of (\ref{eqn:aunifell}), $d_i\in L^\infty (\Omega)$ satisfy
\begin{equation}\label{eqn:dialpha}
    d_i(x)\geq \lambda>0.
\end{equation}

Now fix some $x_0\in\Omega$. Denote $\overline{Q}=Q(x_0)=(\overline q_{ij})_{i,j=1}^n$ and put $\overline{\bm u}(y)=\bm u(\overline Q^T y)$, so that $\bm u(x)=\overline{\bm u}(\overline Qx)$. Then, by the chain rule, with $y=\overline Q x$
\begin{equation}\label{eqn:partialy}
    \pd{\overline{u}^\alpha}{y_i}(y)=\pd{}{y_i}(u^\alpha(\overline{Q}^T y))=\sum_{j=1}^n \pd{u^\alpha}{x_j}(\overline{Q}^T y) \pd{}{y_i}\sum_{k=1}^n \overline{q}_{k j}y_k =\sum_{j=1}^n \overline{q}_{ij}\pd{u^\alpha}{x_j}(x),\quad \text{i.e. } \nabla_y \overline{\bm u}(y)= \nabla \bm u (x) \overline Q^T.
\end{equation}

Further, from positive definiteness of ${\mathcal A}$ we get $\gamma>0$ such that 
\begin{equation}\label{eqn:Agamma}
    {\mathcal A}\xi\cdot \xi \geq \gamma \abs{\xi}^2, \quad \xi \in \R^N .
\end{equation}

Now we consider the first term on the right hand side of (\ref{eqn:testdiag}) at the point $x_0$. Denote $y_0=\overline{Q}x_0$ and compute using (\ref{eqn:partialy}) (recall also that $a_{ij}(x_0)=\sum_{k=1}^n \overline{q}_{ki} d_k(x_0)\overline{q}_{kj}$)
\begin{multline}\label{eqn:computeAQDQ}
    \mathbb A(x_0)\nabla (\bm u(x_0)-\Pi_K^{\mathcal A} \bm u(x_0)) : \nabla (\bm u(x_0)-\Pi_K^{\mathcal A} \bm u(x_0))\\=
    \sum_{\alpha,\beta=1}^N \sum_{i,j,k=1}^n \mathcal{A}^{\alpha\beta} \overline{q}_{ki} d_k(x_0) \overline{q}_{kj} \pd{(\bm u - \Pi_K^{\mathcal A} \bm u)^\alpha}{x_i}(x_0)\pd{(\bm u - \Pi_K^{\mathcal A} \bm u)^\beta}{x_j}(x_0)\\=
    \sum_{k=1}^n  d_k(x_0) \mathcal{A} \sum_{i=1}^n \overline{q}_{ki} \pd{(\bm u - \Pi_K^{\mathcal A} \bm u)}{x_i}(x_0) \cdot\sum_{j=1}^n \overline{q}_{kj}\pd{(\bm u - \Pi_K^{\mathcal A} \bm u)}{x_j}(x_0)\\=
    \sum_{k=1}^n  d_k(x_0) \mathcal{A} \pd{(\overline{\bm u} - \Pi_K^{\mathcal A} \overline{\bm u})}{y_k}(y_0) \cdot \pd{(\overline{\bm u} - \Pi_K^{\mathcal A} \overline{\bm u})}{y_k}(y_0).
\end{multline}
Hence, using (\ref{eqn:Agamma}) and (\ref{eqn:dialpha}) we continue the calculation by
\begin{equation*}
    \sum_{k=1}^n  d_k(x_0) \mathcal{A} \pd{(\overline{\bm u} - \Pi_K^{\mathcal A} \overline{\bm u})}{y_k}(y_0) \cdot \pd{(\overline{\bm u} - \Pi_K^{\mathcal A} \overline{\bm u})}{y_k}(y_0)\geq \sum_{k=1}^n\lambda\gamma\abs{\pd{(\overline{\bm u} - \Pi_K^{\mathcal A} \overline{\bm u})}{y_k}(y_0)}^2\!\!=\lambda\gamma \abs{\nabla_y(\overline{\bm u}- \Pi_K^{\mathcal A}\overline{\bm u})(y_0)}^2
\end{equation*}
Using (\ref{eqn:partialy}) and recalling that $\overline{Q}$ is orthogonal, we get 
\begin{equation*}
 \abs{\nabla_y(\overline{\bm u}- \Pi_K^{\mathcal A}\overline{\bm u})(y_0)}=\abs{\nabla (\bm u -\Pi_K^{\mathcal A}\bm u)(x_0)\overline{Q}^T}=\abs{\nabla (\bm u -\Pi_K^{\mathcal A}\bm u)(x_0)}.
\end{equation*}
Hence 
\begin{equation*}
    \mathbb A(x_0)\nabla (\bm u(x_0)-\Pi_K^{\mathcal A} \bm u(x_0)) : \nabla (\bm u(x_0)-\Pi_K^{\mathcal A} \bm u(x_0))\geq \lambda\gamma\abs{\nabla (\bm u -\Pi_K^{\mathcal A}\bm u)(x_0)}^2.
\end{equation*}
Since $x_0\in\Omega$ was arbitrary, we can integrate this inequality over $\Omega$ to obtain 
\begin{equation}\label{eqn:ellestim}
        \int_\Omega \mathbb A\nabla (\bm u-\Pi_K^{\mathcal A} \bm u) : \nabla (\bm u-\Pi_K^{\mathcal A} \bm u)\dx\geq \lambda\gamma \norm{\nabla (\bm u -\Pi_K^{\mathcal A}\bm u)}_2^2\geq \tilde\gamma\norm{\bm u -\Pi_K^{\mathcal A}\bm u}_{W^{1,2}}^2,
\end{equation}
for some $\tilde \gamma>0$ from Poincar\'e inequality, since $\bm u -\Pi_K^{\mathcal A}\bm u$ has zero trace.

Regarding the  second term in (\ref{eqn:testdiag}), for fixed $x_0\in\Omega$, by analogous computation as \eqref{eqn:computeAQDQ} we get 
\begin{equation*}
    \mathbb A(x_0)\nabla \Pi_K^{\mathcal A} \bm u(x_0) : \nabla (\bm u(x_0)-\Pi_K^{\mathcal A} \bm u(x_0))=
    \sum_{k=1}^n  d_k(x_0) \mathcal{A} \pd{\Pi_K^{\mathcal A} \overline{\bm u}}{y_k}(y_0) \cdot \pd{(\overline{\bm u} - \Pi_K^{\mathcal A} \overline{\bm u})}{y_k}(y_0).
\end{equation*}
Using now on the right hand side Proposition \ref{prop:gradineqA} for $\overline{\bm u}$ and $\mathcal A$ gives us
\begin{equation*}
     \mathbb A(x_0)\nabla \Pi_K^{\mathcal A} \bm u(x_0) : \nabla (\bm u(x_0)-\Pi_K^{\mathcal A} \bm u(x_0))\geq 0.
\end{equation*}
Again, since $x_0\in\Omega$ was arbitrary, we can integrate this inequality over $\Omega$ to obtain
\begin{equation*}
     \int_\Omega \mathbb A \nabla \Pi_K^{\mathcal A} \bm u : \nabla (\bm u-\Pi_K^{\mathcal A} \bm u)\dx\geq 0.
\end{equation*}
Together with (\ref{eqn:ellestim}), looking back at (\ref{eqn:testdiag}) we see that $\norm{\bm u-\Pi_K^{\mathcal A} \bm u}_{W^{1,2}}\leq 0$, so $\bm u=\Pi_K^{\mathcal A} \bm u$ a.e. in $\Omega$ and thus $\bm u(x)\in K$  for a.e. $x\in\Omega$. This concludes the proof.
\end{proof}

\subsection{Nonlinear parabolic systems}

Now we consider the convex hull property for parabolic equations.
Throughout this section, let $T>0$ and put $Q_T=(0,T]\times \Omega$.

\begin{definition}[Parabolic boundary]
 We denote by $\parbd Q_T = (\{0\} \times \Omega) \cup ([0,T] \times \partial \Omega) $ the \emph{parabolic boundary of $Q_T$}.
\end{definition}
In the following we consider the Carath\'eodory functions (i.e. measurable in $(0,T)\times \Omega$ and continuous in the other variables)
\begin{align*}
 a_0&\colon (0,T)\times \Omega \times \R^N \times \R^{N\times n}\to [0,\infty), \\ 
\bm a&\colon (0,T)\times \Omega \to \R^{n\times n}, \\
    \bm b&\colon(0,T)\times\Omega\times\R^N\times\R^{N\times n} \to \R^n, \\
    c&\colon(0,T)\times\Omega\times\R^N\times\R^{N\times n}\to [0,\infty).
\end{align*}

Now we consider weak solutions to 
\begin{equation}\label{eqn:parnonlin}
    \partial_t \bm u - \div (\mathbb A\nabla \bm u)+\bm b \cdot \nabla \bm u + c\bm u=\bm 0 \quad\text{ in }Q_T,
\end{equation}
where $$\mathbb A_{ij}^{\alpha\beta}(t,x,\bm u,\nabla \bm u)=\begin{cases}a_0(t,x,\bm u,\nabla \bm u) a_{ij}(t,x), &\alpha=\beta\\ 0, &\alpha\neq \beta,
\end{cases} \text{ for }i,j=1,\dots,n;\ \alpha,\beta =1,\dots,N.$$ Written thus in components, the equation reads as the system
\begin{multline*}
 \partial_t u^\alpha(t,x) - \sum_{i=1}^n \pd{}{x_i}\left( a_0(t,x,\bm u,\nabla \bm u) \sum_{j=1}^n a_{ij}(t,x) \pd{u^\alpha}{x_j}(t,x)\right) \\+ \sum_{i=1}^n b_i(t,x,\bm u,\nabla \bm u) \pd{u^\alpha}{x_i}(t,x) + c(t,x,\bm u,\nabla \bm u) u^\alpha(t,x) = 0, \quad \alpha=1,\dots, N.
\end{multline*}
\begin{remark}
In the case that $a_0(t,x,\bm u,\nabla \bm u)=\abs{\nabla \bm u}^{p-2}$, $\bm a=I\in \R^{n\times n}$, $\bm b=\bm 0$, $c=0$, we obtain the parabolic $p$-Laplace equation 
\begin{equation*}
    \partial_t \bm u -\laplace_p \bm u:=\partial_t \bm u -\div(\abs{\nabla \bm u}^{p-2}\nabla \bm u)=\bm 0.
\end{equation*}
\end{remark}

\begin{definition}[Weak solution]
We call $\bm u\in L^p((0,T); W^{1,p}(\Omega; \R^N))\cap C([0,T];L^2(\Omega;\R^N))$ with  $1<p<\infty$ and $\partial_t \bm u\in L^{p'}((0,T); W^{-1,p'}(\Omega;\R^N))$ a \emph{weak solution} to the system (\ref{eqn:parnonlin}) if
\begin{equation*}
    \partial_t \bm u(t) - \div (\mathbb A(t)\nabla \bm u(t))+\bm b(t) \cdot \nabla \bm u(t) + c(t)\bm u(t)=\bm 0 \quad\text{ in }W^{-1,p'}(\Omega;\R^N) + L^2(\Omega;\R^N) 
\end{equation*}
for a.e. $t\in (0,T)$. That is, it holds
\begin{multline}\label{eqn:wspara}
\left\langle \partial_t \bm u(t), \bm \varphi \right\rangle +\int_\Omega a_0(t,x,\bm u(t,x),\nabla\bm u(t,x)) \sum_{\alpha=1}^N \sum_{i,j=1}^n a_{ij}(t,x)\pd{ u^\alpha}{ x_j}(t,x) \pd{\varphi^\alpha}{ x_i}(t,x) \\ + \!\sum_{\alpha=1}^N \!\sum_{i=1}^n b_i(t,x,\!\bm u(t,x),\nabla\bm u(t,x))\pd{ u^\alpha}{ x_i}(t,x) \varphi^\alpha(t,x) + c(t,x,\bm u(t,x), \!\nabla \bm u(t,x))\sum_{\alpha=1}^N \!u^\alpha(t,x)\varphi^\alpha(t,x) \dx=0
\end{multline}
for a.e. $t\in (0,T)$ and all $\bm \varphi\in W_0^{1,p}(\Omega;\R^N)\cap L^2(\Omega;\R^N)$.
\end{definition}

We also need to properly define the convex hull of (parabolic) boundary values $\bm u(\parbd Q_T)$.

\begin{definition}[Convex hull of parabolic boundary values]\label{def:parbdvalues}
Let $\bm u\in L^p((0,T); W^{1,p}(\Omega; \R^N))\cap C([0,T];L^2(\Omega;\R^N))$. Then we define $\convhull \bm u(\parbd Q_T)\subset\R^N$ to be the smallest closed convex set such that $\bm u(t,x)\in \convhull \bm u(\parbd Q_T)$ for a.e. $(t,x)\in (0,T)\times \partial \Omega$ and $\bm u(0,x)\in \convhull \bm u(\parbd Q_T)$ for a.e. $x\in \Omega$. Note that this is well-defined since by assumption $\bm u(0,\cdot)\in L^2(\Omega;\R^N)$ and the trace of $\bm u$ is in $L^p((0,T)\times \partial \Omega;\R^N)$, as $p<\infty$.
\end{definition}

\begin{remark}
Similarly as in Remark \ref{rem:essbdval-ell}, one can alternatively define $\convhull \bm u(\parbd Q_T)$ as the closed convex hull of essential values of $\bm u$ on $\parbd Q_T$.
\end{remark}

\begin{theorem}[Convex hull property for certain nonlinear parabolic systems]\label{thm:CHPparanonlin}
Let $1<p<\infty $ and let $\bm u\in L^{p'}((0,T); W^{1,p}(\Omega; \R^N))\cap C([0,T];L^2(\Omega;\R^N))$ with $\partial_t\bm u\in L^p((0,T); W^{-1,p'}(\Omega;\R^N))$ be a weak solution to (\ref{eqn:parnonlin}). Let the coefficients satisfy that $\bm a$ is uniformly positive definite, so that there exists $\lambda>0$ such that for a.e. $(t,x)\in (0,T) \times \Omega$ we have 
$$    \xi\cdot \bm a(t,x) \xi \geq \lambda\abs{\xi}^2, \quad \xi\in\R^n,$$
and that for some $C>0$ it holds
\begin{equation}\label{eqn:b-growth}
\abs{\bm b(t,x,z,\xi)}\leq C\sqrt{a_0(t,x,z,\xi)},\quad (t,x,z,\xi)\in (0,T)\times \Omega\times \R^N\times\R^{N\times n}.
\end{equation}
Then $\bm u$ satisfies the following convex hull property:
\begin{enumerate}[(i)]
    \item If $c=0$ a.e. in $\Omega$ and $K=\convhull \bm u(\parbd Q)$, then $\bm u(x,t) \in K$ for a.e. $(t,x)\in(0,T)\times \Omega$. \smallskip
    \item If $c\geq 0$ a.e. in $\Omega$ and $K=\convhull (\bm u(\parbd Q)\cup \{0\} )$, then $\bm u(x,t) \in K$ for a.e. $(t,x)\in(0,T)\times \Omega$.
\end{enumerate}
\end{theorem}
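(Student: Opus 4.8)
The plan is to follow the template of the scalar maximum principle and of Theorem \ref{thm:CHPellipticlin}, using now the plain Euclidean projection $\Pi_K$ (legitimate because the $N\times N$ block of the leading tensor is the identity), and adding the ingredients needed for the time derivative and the lower order terms. Fix $K$ as in the statement: $K=\convhull\bm u(\parbd Q)$ in case (i) and $K=\convhull(\bm u(\parbd Q)\cup\{0\})$ in case (ii); in both cases $\bm u(t,x)\in K$ for a.e. $(t,x)\in(0,T)\times\partial\Omega$ and $\bm u(0,\cdot)\in K$ a.e., so in particular Proposition \ref{prop:timederiv} applies. Put $\bm w:=\bm u-\Pi_K\bm u$. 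Since $\Pi_K$ is $1$-Lipschitz (Corollary \ref{cor:projlipsch}), $\bm w(t)\in W^{1,p}(\Omega;\R^N)\cap L^2(\Omega;\R^N)$ for a.e. $t$, and the boundary condition forces $\bm w(t)\in W_0^{1,p}(\Omega;\R^N)\cap L^2(\Omega;\R^N)$, while $\bm w(0)=0$. Hence $\bm w(t)$ is admissible in \eqref{eqn:wspara} for a.e. $t$; I would insert it and integrate the identity over $t\in(0,t^*)$ for an arbitrary $t^*\in(0,T]$.

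I would then estimate the four resulting terms separately. The time term equals $\tfrac12\norm{\bm w(t^*)}_2^2-\tfrac12\norm{\bm w(0)}_2^2=\tfrac12\norm{\bm w(t^*)}_2^2$ by Proposition \ref{prop:timederiv}. For the leading term I would decompose $\nabla\bm u=\nabla\bm w+\nabla\Pi_K\bm u$ inside $\int_\Omega\mathbb A\nabla\bm u:\nabla\bm w$. The $\nabla\bm w$ part gives $\int_\Omega a_0\sum_\alpha(\nabla w^\alpha\cdot\bm a\nabla w^\alpha)\ge\lambda\int_\Omega a_0\abs{\nabla\bm w}^2\ge0$, using $a_0\ge0$ and uniform positive definiteness of $\bm a$. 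For the cross term $\int_\Omega\mathbb A\nabla\Pi_K\bm u:\nabla\bm w$ I would argue exactly as in Theorem \ref{thm:CHPellipticlin}: for a.e. $(t_0,x_0)$ diagonalise $\bm a(t_0,x_0)=\overline Q^TD\overline Q$ with diagonal entries $d_k\ge\lambda>0$, and pass to the rotated coordinates $\overline{\bm u}(y)=\bm u(t_0,\overline Q^Ty)$ (noting $\Pi_K$ commutes with this precomposition), which turns the integrand into $a_0\sum_k d_k\,\partial_{y_k}\Pi_K\overline{\bm u}\cdot\partial_{y_k}(\overline{\bm u}-\Pi_K\overline{\bm u})$; this is $\ge0$ since $a_0\ge0$, $d_k\ge\lambda$, and each summand is nonnegative by Proposition \ref{prop:gradineqA} with $A=I$.

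For the drift term I would first use the orthogonality relation $\partial_{x_i}\Pi_K\bm u\cdot(\bm u-\Pi_K\bm u)=0$ of Proposition \ref{prop:projorth} to replace $\nabla\bm u$ by $\nabla\bm w$ in the contraction against $\bm w$, which gives $\abs{\int_\Omega\bm b\cdot\nabla\bm u\cdot\bm w}\le\int_\Omega\abs{\bm b}\abs{\nabla\bm w}\abs{\bm w}\le C\int_\Omega\sqrt{a_0}\,\abs{\nabla\bm w}\abs{\bm w}$ by \eqref{eqn:b-growth}, and then Young's inequality splits the last quantity as $\le\tfrac\lambda2\int_\Omega a_0\abs{\nabla\bm w}^2+\tfrac{C^2}{2\lambda}\int_\Omega\abs{\bm w}^2$; the gradient piece is absorbed into the leading term. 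The zeroth order term is $\int_\Omega c\,\bigl(\abs{\bm w}^2+\Pi_K\bm u\cdot(\bm u-\Pi_K\bm u)\bigr)$, which vanishes in case (i) (there $c=0$) and is $\ge0$ in case (ii) because $c\ge0$ and, by Proposition \ref{prop:convexprop} with $z=0\in K$, $\Pi_K\bm u\cdot(\bm u-\Pi_K\bm u)\ge0$.

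Collecting everything, the tested identity yields
\begin{equation*}
\tfrac12\norm{\bm w(t^*)}_2^2+\tfrac\lambda2\int_0^{t^*}\!\!\int_\Omega a_0\abs{\nabla\bm w}^2\dx\dt\le\frac{C^2}{2\lambda}\int_0^{t^*}\norm{\bm w(s)}_2^2\dx[s]\qquad\text{for all }t^*\in(0,T].
\end{equation*}
Dropping the nonnegative second term on the left and recalling $\bm w(0)=0$, Grönwall's inequality (applied to the continuous function $t^*\mapsto\norm{\bm w(t^*)}_2^2$) gives $\bm w\equiv0$ in $Q_T$, i.e. $\bm u=\Pi_K\bm u$, hence $\bm u(t,x)\in K$ for a.e. $(t,x)$. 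I expect the drift term to be the main obstacle: it is precisely there that the structural growth bound $\abs{\bm b}\le C\sqrt{a_0}$ is indispensable, since only an estimate by the $a_0$-weighted Dirichlet energy — the natural coercive quantity, which degenerates exactly where $a_0$ vanishes — can be absorbed; the sign of the cross term similarly forces the identity block in $\mathbb A=a_0I\otimes\bm a$, so that the Euclidean projection is the correct one. The remainder is the bookkeeping of the time term via Proposition \ref{prop:timederiv} and the Grönwall closure.
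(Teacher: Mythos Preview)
Your proposal is correct and essentially identical to the paper's argument: both test with $\bm w=\bm u-\Pi_K\bm u$, treat the time term via Proposition~\ref{prop:timederiv}, split the leading term and handle the cross piece by diagonalising $\bm a$ and applying the componentwise inequality of Proposition~\ref{prop:gradineqA}, kill the $\nabla\Pi_K\bm u$ contribution in the drift via Proposition~\ref{prop:projorth} and absorb the remainder by Young and \eqref{eqn:b-growth}, sign the $c$-term in case~(ii) using $0\in K$, and close with Gr\"onwall. The only cosmetic differences are that you work in time-integrated form (the paper stays with the differential inequality until the end) and that you estimate the quadratic piece $\int_\Omega\mathbb A\nabla\bm w:\nabla\bm w$ directly from $\xi\cdot\bm a\,\xi\ge\lambda|\xi|^2$ rather than through the diagonalisation; neither changes the substance.
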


\begin{proof}
We use as a test function in \eqref{eqn:wspara} the function $\bm u(t)-\Pi_K \bm u(t)\in W^{1,p}_0(\Omega;\R^N)\cap L^2(\Omega;\R^N)$. Therefore we have that for a.e. $t\in (0,T)$
\begin{multline}
    \underbrace{\scalar{\partial_t \bm u(t)}{\bm u(t)-\Pi_K \bm u(t)}}_{(t)}+\underbrace{\int_\Omega \mathbb A \nabla \bm u(t) :\nabla (\bm u(t)-\Pi_K \bm u(t)\dx)}_{(a)} \\ +\underbrace{\int_\Omega \bm b \cdot \nabla \bm u(t)\cdot (\bm u(t)-\Pi_K \bm u(t))\dx}_{(b)} +\underbrace{\int_\Omega c\bm u(t)\cdot(\bm u(t)-\Pi_K \bm u(t))\dx}_{(c)}=0.
\end{multline}
  Now let us consider each term separately. Using Proposition~\ref{prop:timederiv} we find that 
\begin{equation*}
  (t)=\scalar{\partial_t\bm u(t)}{\bm u(t)-\Pi_K \bm u(t)} = \frac12 \partial_t \int_\Omega |\bm u(t)-\Pi_K \bm u(t)|^2\dx.
\end{equation*}
For the $(a)$ term we can do the same trick with orthogonal change of variables as before. So for any $(t,x)\in Q_T$ we find an orthogonal matrix $Q(t,x)\in \R^{n\times n}$ and diagonal matrix $D(t,x)\in \R^{n\times n}$ with diagonal entries $d_i(t,x)\geq \lambda$ such that $\bm a(t,x)= Q^T(t,x)D(t,x)Q(t,x)$. Now for fixed $(t_0,x_0)$ put $\overline{Q}=Q(t_0,x_0)$. Denoting  $\overline{\bm u}(y,t)=\bm u(x,t)$ for $y=\overline{Q}x$ and following exactly the same computations as in the proof of Theorem \ref{thm:CHPellipticlin}, we get  
\begin{multline*}
    \mathbb A(t_0,x_0,\bm u(t_0,x_0),\nabla \bm u(t_0,x_0)) \nabla(\bm u(t_0,x_0) - \piku(t_0,x_0)): \nabla(\bm u(t_0,x_0) -\piku(t_0,x_0)) \\ =  a_0 (t_0,x_0,\bm u(t_0,x_0),\nabla \bm u(t_0,x_0))\cdot \hspace{25em} \\ \cdot\sum_{\alpha = 1}^N\sum_{i,j,k=1}^n\overline{q}_{ki}d_k(t_0,x_0) \overline{q}_{kj} \pd{(\bm u(t_0,x_0) -\piku(t_0,x_0) )^\alpha}{x_i}\pd{(\bm u(t_0,x_0) -\piku(t_0,x_0) )^\alpha}{x_j} \\= a_0 (t_0,x_0,\bm u(t_0,x_0),\nabla \bm u(t_0,x_0))\sum_{k=1}^n d_k(t_0,x_0) \abs{\pd{(\overline{\bm u}(t_0,y_0) -\Pi_K \overline{\bm u}(t_0,y_0) )}{y_k}}^2 \\ \geq \lambda  a_0 (t_0,x_0,\bm u(t_0,x_0),\nabla \bm u(t_0,x_0))\abs{\nabla (\bm u(t_0,x_0)-\piku (t_0,x_0))}^2.
\end{multline*}
Now keeping $t_0$ fixed and integrating this inequality over $x_0$ gives
\begin{equation*}
    \int_\Omega \mathbb A \nabla(\bm u(t_0) - \piku(t_0)): \nabla(\bm u(t_0) -\piku(t_0)) \dx  \geq \lambda \int_\Omega  a_0 
\abs{\nabla (\bm u(t_0)-\piku (t_0))}^2\dx.
\end{equation*}
By an analogous computation and Proposition \ref{prop:gradineq}, 
\begin{multline*}
    \mathbb A(t_0,x_0,\bm u(t_0,x_0),\nabla \bm u(t_0,x_0))\nabla \Pi_K \bm u(t_0,x_0) : \nabla (\bm u(t_0,x_0)-\Pi_K \bm u(t_0,x_0)) \\ =
    a_0 (t_0,x_0,\bm u(t_0,x_0),\nabla \bm u(t_0,x_0)) \sum_{k=1}^n  d_k(t_0,x_0)  \pd{\Pi_K \overline{\bm u}}{y_k}(t_0,y_0) \cdot \pd{(\overline{\bm u} - \Pi_K \overline{\bm u})}{y_k}(t_0,y_0)\geq 0.
\end{multline*}
Again, integrating over $x_0\in \Omega$ gives 
\begin{equation*}
 \int_\Omega \mathbb A(t,\cdot,\bm u, \nabla \bm u) \nabla \piku(t) :\nabla( \bm u(t)-\piku(t)) \dx \geq 0.
\end{equation*}
Altogether we have that for a.e. $t\in(0,T)$
\begin{multline*}
 (a)=\int_\Omega \mathbb A \nabla(\bm u(t)-\piku(t)):\nabla(\bm u(t)-\piku(t))\dx + \int_\Omega \mathbb A\nabla \piku(t) :\nabla( \bm u(t)-\piku(t)) \dx \\ \geq \lambda \int_\Omega a_0 (t,\cdot, \bm u(t), \nabla \bm u(t))  \abs{\nabla(\bm u(t)-\piku(t))}^2\dx.
\end{multline*}

Using the growth condition \eqref{eqn:b-growth}, Proposition \ref{prop:projorth}, the Cauchy-Schwarz and Young inequalities we obtain for every $\epsilon>0$:
\begin{align*}
(b)&= \int_\Omega \bm b \cdot \nabla (\bm u(t)-\piku(t))\cdot (\bm u(t)-\Pi_K \bm u(t))\dx +\int_\Omega \bm b \cdot \nabla \piku(t)\cdot (\bm u(t)-\Pi_K \bm u(t)) \dx \\
    &\geq -\int_\Omega \abs{\bm b} \abs{\nabla (\bm u(t)-\piku(t))}\abs{\bm u(t)-\piku(t)}\dx
    \\
    &\quad +\int_\Omega \sum_{i=1}^n b_i \underbrace{\pd{\piku(t)}{x_i}\cdot (\bm u(t)-\Pi_K \bm u(t))}_{=0\text{ (Proposition \ref{prop:projorth})}}\dx \\
    &\geq -C\int_\Omega \sqrt{a_0}\abs{\nabla (\bm u(t)-\piku(t))}\abs{\bm u(t)-\piku(t)}\dx  \\
    &\geq  -C\int_\Omega \epsilon  a_0 \abs{\nabla(\bm u(t)-\piku(t))}^2\dx -C\int_\Omega\frac{1}{4\epsilon}\abs{\bm u(t)-\piku(t)}^2\dx.
\end{align*}
Finally, for the last term, we distinguish the two cases from the statement of the theorem.

\begin{enumerate}[(i)]
    \item $c=0$ a.e., so clearly $(c) = 0$.
    \item $c\geq 0$ a.e. Then, since we have $0\in K$, we can use Corollary \ref{cor:projineq} to obtain
    \begin{multline*}
        (c)=\int_\Omega c(\bm u(t)-\piku(t))\cdot(\bm u(t)-\piku(t)) +c(\piku(t)-0)\cdot(\bm u(t)-\piku(t))\dx\\
        \geq \int_\Omega c \abs{\bm u(t)-\piku(t)}^2\dx.
    \end{multline*}
\end{enumerate}
In both cases $(c)\geq 0$. 

So, putting it all together, we obtain for a.e. $t\in(0,T)$ 
\begin{multline*}
    \frac 12 \partial_t \int_\Omega \abs{\bm u(t)-\piku(t)}^2\dx+(\lambda-C\epsilon)\int_\Omega a_0\abs{\nabla (\bm u(t)-\piku(t))}^2\dx \leq \frac{C}{4\epsilon}\int_\Omega \abs{\bm u(t)-\piku(t)}^2\dx.
\end{multline*}
Upon setting $\epsilon \leq \lambda/C$, so that $\lambda-C\epsilon \geq 0$, and denoting $\eta (t):= \int_\Omega \abs{\bm u(t)-\piku(t)}^2\dx$ we obtain
\begin{equation*}
    \eta'(t)\leq \frac{C}{2\epsilon}\eta(t)\quad \text{for a.e. }t\in(0,T).
\end{equation*}
Since $\eta(0)=0$, the Gronwall inequality implies $\eta(t)=0$, $t\in(0,T)$. Thus $\norm{\bm u(t)-\piku(t)}_{L^2(\Omega;\R^N)}=0$ for a.e. $t$, in particular $\bm u(t)=\piku(t)$ a.e. in $\Omega$  for a.e. $t\in(0,T)$, which means that $\bm u(x,t)\in K$ for a.e. $t\in(0,T)$ and a.e. $x\in\Omega$. This finishes the proof.
\end{proof}

\begin{remark}[On existence]
In our discussion we do not touch the question of the \emph{existence} of a solution. For this reason, the assumptions on the coefficients are rather broad, and in no way guarantee the existence of a solution, nor that the integrals in the weak formulation exist. We instead take the existence of a weak solutions as an \emph{assumption}, so we implicitly assume that all the nonlinear expressions in the weak formulation have the necessary integrability. We choose this approach so that the question of existence can be investigated independently of the present discussion.

Indeed the existence has been studied in other works, usually the requirement is some kind of monotonicity. We refer the interested reader to the monograph \cite[Part II]{roubicekNonlinearPartialDifferential2013} and the references within.
\end{remark}

\section{Counterexamples for linear systems}\label{chap:counterex}

Here we show two counterexamples to the convex hull property. They illustrate that coupling conditions for the coefficients cannot be relaxed.

\subsection{Elliptic system}
For elliptic linear systems 
\begin{equation*}
-\div (\mathbb A \nabla \bm u)=\bm 0,
\end{equation*}
we present a counterexample in one variable with $x$-dependent $\mathcal A(x)\in \R^{N\times N}$. 
\begin{example}\label{ex:counterexell}
Let $n=1$, $N=2$, $\mathcal A(x)=\left(\begin{smallmatrix} 1 & -x \\ -x & 1 \end{smallmatrix} \right)$, $\bm a\equiv 1$. Then we have $\mathcal A$ symmetric uniformly positive definite on $\Omega=(0,\ell)$ with $\ell<1$. The system of equations
\begin{align*}
-(u^1{}' -xu^2{}')' &= 0 \\
-(-xu^1{}' +u^2{}')' &= 0
\end{align*}
is uniformly elliptic and has a solution
\begin{align*}
u^1(x)=\log\left(\frac{1-x}{1+x}\right), \quad
u^2(x)=\log(1-x^2).
\end{align*}
Indeed, 
\begin{align*}
-(u^1{}'(x) -xu^2{}'(x))' &= -\left(\frac{-2}{1-x^2}-x\frac{-2x}{1-x^2} \right)'=0, \\
-(-x u^1{}'(x) +u^2{}'(x))'&= -\left(-x\frac{-2}{1-x^2}+\frac{-2x}{1-x^2} \right)'=0.
\end{align*}

As $\bm u(0)=(0,0)$ and $u^2(\ell)\neq 0$, then the convex hull property $\bm u((0,\ell))\subset \convhull \{ \bm u(0),\bm u(\ell) \}$ implies that the ratio $u^1(x)/u^2(x)$ is constant (and equal to $u^1(\ell)/u^2(\ell)$). However, by calculation
\begin{equation*}
\left( \frac{u^1(x)}{u^2(x)}\right)' = \frac{2 (1+x) \log (1+x)+2 (1-x) \log (1-x)}{\left(x^2-1\right) (\log (1-x)+\log (x+1))^2} \neq 0 \quad x\in(0,\ell),
\end{equation*}
therefore the ratio $u^1(x)/u^2(x)$ is non-constant and $\bm u$ does not satisfy the convex hull property.
\end{example}

\begin{remark}[On failure of variable projection]
Here it may be worthwhile to note where the method of projection to $K$ fails for $x$-dependent $\mathcal A$. Using at each $x\in\Omega$ the projection $\Pi_K^{\mathcal A(x)}$ is a plausible try. However, the estimate analogous to Proposition \ref{prop:gradineqA}, that is
\begin{equation*}
    \mathcal A(x)\nabla \bm u(x) :\nabla \Pi_K^{\mathcal A(x)} \bm u(x) \geq  \mathcal A(x) \nabla \Pi_K^{\mathcal A(x)} \bm u(x)  :\nabla \Pi_K^{\mathcal A(x)} \bm u(x) ,
\end{equation*}
cannot hold. To see this, take $\bm u$ constant near $x_0$, $K$ flat near $\Pi_K^{\mathcal A(x_0) }\bm u(x_0)$ and $\mathcal A$ variable near $x_0$ in such a way that $\nabla \Pi_K^{\mathcal A(x_0)} \bm u(x_0)\neq 0$. Since $\nabla \bm u(x_0)=0$, this contradicts the inequality.
\end{remark}

\subsection{Parabolic system}
In the case of linear parabolic systems
\begin{equation*}\label{eqn:paralinsys}
    \partial_t \bm u-\div(\mathbb A\nabla \bm u)=\bm 0
\end{equation*}
we present a counterexample with constant coefficients, in one spatial dimension and $\mathcal A$ diagonal.
\begin{example}\label{ex:counterexpara}
Consider $n=1$, $N=2$, $\mathcal A=\left(\begin{smallmatrix}
a_1 & 0 \\
0 & a_2\\
\end{smallmatrix}\right)$ for some constants $a_1,a_2>0$, $a_1\neq a_2$ and the following problem:
\begin{align*}
    \partial_t u^1 - a_1\pd{^2 u^1}{x^2}&=0 \quad\text{in }Q=(0,\infty)\times(0,\pi),\\
    \partial_t u^2 - a_2 \pd{^2 u^2}{x^2}&=0 \quad\text{in }Q.
\end{align*}
Then a solution is 
\begin{equation*}
    u^1(t,x)= e^{-a_1 t}\sin x, \quad u^2(t,x)=e^{-a_2 t}\sin x, \quad (t,x\\)\in [0,\infty) \times [0,\pi],
\end{equation*}
which has the parabolic boundary values 
\begin{align*}
u^1(0,x)=u^2(0,x)&=\sin x, \quad x\in(0,\pi),\\
    u^1(t,0)= u^1(t,\pi)=u^2(t,0)=u^2(t,\pi)&=0, \quad t\in (0,\infty).
\end{align*}
Now the closed convex hull of the boundary values is $\convhull \bm u(\parbd Q) =\{(s,s):s\in[0,1]\}$, whereas for $t>0$, $x\in (0,\pi)$ we see that $u^1(t,x)\neq u^2(t,x)$, so $\bm u(t,x)\notin \convhull \bm u(\parbd Q)$. Therefore $\bm u$ does not satisfy the convex hull property on any finite time interval. 

\end{example}

\subsection*{Acknowledgements}
I would like to thank Lars Diening and Sebastian Schwarzacher for introducing me to the problem, their ideas and discussions. I acknowledge the support of the the ERC-CZ grant LL2105, the Czech Science Foundation (GA\v{C}R) under grant No.\@ 23-04766S, and Charles University, project GA UK No.\@ 393421.

\bibliographystyle{alpha}
\bibliography{biblio}
\end{document}